\title[Second Lyapunov exponent]{On the second Lyapunov exponent of some multidimensional continued fraction algorithms}
\author[V.~Berth\'e]{Val\'erie Berth\'e}
\author[W.~Steiner]{Wolfgang Steiner}
\address{Universit\'e de Paris, IRIF, CNRS, F--75013 Paris, FRANCE}
\email{berthe@irif.fr, steiner@irif.fr}
\author[J. M. Thuswaldner]{J\"org M. Thuswaldner}
\address{Chair of Mathematics and Statistics, University of Leoben, A--8700 Leoben, AUSTRIA}
\email{joerg.thuswaldner@unileoben.ac.at}
\thanks{This work was supported by the Agence Nationale de la Recherche through the project Codys (ANR-18-CE40-0007).
The third author was supported by the projects FWF P27050 and FWF P29910 granted by the Austrian Science Fund and by project FWF/RSF I3466 granted by the Austrian Science Fund and the Russian Science Foundation. Part of this work has been done while the three authors were visiting the Erwin Schr\"odinger Institute in Vienna.}
\newtheorem{lemma}{Lemma}[section]
\newtheorem{theorem}[lemma]{Theorem}
\newtheorem{proposition}[lemma]{Proposition}
\theoremstyle{definition}
\theoremstyle{remark}
\newtheorem{remark}[lemma]{Remark}
\numberwithin{equation}{section}
\newcommand{\bx}{\mathbf{x}}
\newcommand{\by}{\mathbf{y}}
\newcommand{\bv}{\mathbf{v}}
\begin{document} 
\begin{abstract}
We study the strong convergence of certain multidimensional continued fraction algorithms. In particular, in the two- and three-dimensional  case, we prove that the second Lyapunov exponent of Selmer's algorithm is negative and bound it away from zero. Moreover, we give heuristic results on several other continued fraction algorithms. Our results indicate that all classical multidimensional continued fraction algorithms cease to be strongly convergent for high dimensions. The only exception seems to be the Arnoux--Rauzy algorithm which, however, is defined only on a set of measure zero.
\end{abstract}

\maketitle
\section{Introduction}
In the present paper we study strong convergence properties of multidimensional continued fraction algorithms. In particular, we give results and numerical studies for the second Lyapunov exponent of such algorithms. One of our  main objects is \emph{Selmer's algorithm}, which attracted a lot of interest in the recent years, mainly because of its relation to an (unordered) continued fraction algorithm  defined by Cassaigne in 2015. This algorithm, now called \emph{Cassaigne algorithm}, was studied in the context of  word combinatorics by Cassaigne, Labb\'e, and Leroy in~\cite{CLL:17}  where it was shown to be conjugate to Selmer's algorithm. Other properties of Selmer's algorithm have been studied in \cite{AL18,Bruin:15,Bruin:19_Erratum,HT:09,FS:19,Schweiger:01,Schweiger:04}.

The first results on the second Lyapunov exponent of Selmer's algorithm $A_S$ in dimension $d=2$ are due to Schweiger~\cite{Schweiger:01,Schweiger:04}, who proved strong convergence (see Section~\ref{sec:cf} for a definition) almost everywhere. Nakaishi~\cite{Nakaishi06} strengthened this result by showing that the second Lyapunov exponent $\lambda_2(A_S)$ satisfies  $\lambda_2(A_S) < 0$. Negativity of $\lambda_2(A_S)$ was conjectured already by Baldwin~\cite{Baldwin:92}, where Selmer's algorithm is called \emph{generalized mediant algorithm}, GMA for short (see also \cite{Baldwin:92b}; in particular, $e^{\lambda_2(A_S)}$ is numerically calculated in \cite[Table~I on p.~1522]{Baldwin:92}).  Labb\'e~\cite{Lab15} heuristically calculated the Lyapunov exponents for the Cassaigne and Selmer algorithms (for $d=2$); it is actually the equality of these values that indicated the conjugacy of the algorithms. We mention that Bruin, Fokkink, and Kraaikamp~\cite{Bruin:15} give a thorough study of Selmer's algorithm for dimensions $d \ge 2$; however, their proof of the fact that $\lambda_2(A_S)<0$ is incomplete~\cite{Bruin:19_Erratum}.  The simplicity of the Lyapunov spectrum of the Cassaigne algorithm is  proved by Fougeron and Skripchenko \cite{FS:19}; see also \cite{HT:09}. Heuristic calculations for the second Lyapunov exponent of other algorithms are  also provided by Baladi and Nogueira~\cite{Baladi-Nogueira:96}; see also~\cite{Nakaishi02}.

The proof of the negativity of the second Lyapunov exponent of Selmer's algorithm in dimension $d=2$ provided by Nakaishi~\cite{Nakaishi06} is intricate. 
In the present paper we  provide a simple proof for the fact that $\lambda_2(A_S)<0$ for $d=2$ which is based on ideas going back to Lagarias~\cite{Lagarias:93}  as well as Hardcastle and Khanin~\cite{Hardcastle:02,HK:02}. Moreover, we show that the matrices associated with the two-dimensional Selmer algorithm are Pisot whenever they are primitive, and we give a strictly negative upper bound for $\lambda_2(A_S)$.  For $d=3$, using extensive computer calculations (which yield exact results due to an appropriate error handling) we are able to prove  that the second Lyapunov exponent is negative as well. Again, we even provide a strictly negative upper bound for it. For higher dimensions we provide heuristic results. These results indicate that Selmer's algorithm is no longer strongly convergent for dimensions $d\ge 4$.

Another aim of the present paper is to provide numerical calculations in order to obtain heuristic estimates for the second Lyapunov exponent of other well-known continued fraction algorithms. In particular, we consider the Brun algorithm, the Jacobi--Perron algorithm,  the triangle map, and  a new algorithm which is ``in between'' the Arnoux--Rauzy algorithm and Brun's algorithm. It is interesting to see that apart from the Arnoux--Rauzy algorithm, which is strongly convergent in each dimension $d\ge 2$ (see~\cite{AD15}), all algorithms seem to be no longer strongly convergent for high dimensions. Since the Arnoux--Rauzy algorithm is defined only on a set of zero measure (the so-called Rauzy gasket, see~\cite{AS13,AHS:16}), we are not aware of any multi-dimensional continued fraction algorithm such as defined in Section \ref{sec:cf} which acts on a set of positive measure and is strongly convergent in all dimensions.
It was widely expected that the uniform approximation exponent, when it can  be expressed in terms of  the first  and second Lyapunov exponents of the algorithm  $A$ as $1 - \frac{\lambda_2(A)}{\lambda_1(A)}$ (see \cite[Theorem 1]{Lagarias:93})  would be larger than~$1$ (and strictly smaller than Dirichlet's bound $1+\frac{1}{d}$) for all $d \geq 2$; see e.g.\ \cite{Lagarias:93}. 
Our experimental studies indicate that this conjecture might not be true. 

Let us sketch the contents of this paper.
The formalism of multidimensional continued fraction algorithms considered in the present paper is recalled in Section~\ref{sec:cf} together with the conditions given by Lagarias~\cite{Lagarias:93}. The second Lyapunov exponent is discussed in Section~\ref{sec:second}. 
We deal with the connections with the Paley--Ursell inequality in  Section~\ref{sec:PU}.
After that we consider the Selmer algorithm in Section~\ref{sec:Selmer}, the Brun algorithm in Section~\ref{sec:Brun}, the Jacobi--Perron algorithm in Section~\ref{sec:JP},  a new algorithm inspired by the Arnoux--Rauzy algorithm in Section~\ref{sec:Algo}, and the   triangle map in Section~\ref{sec:Garrity}. 
Comparisons between these algorithms are provided in Section~\ref{sec:heuristic}. In the appendix we comment on the error handling needed for the floating point calculations used for the estimation of $\lambda_2(A_S)$ for $d\in\{2,3\}$.
\medskip

\paragraph{\bf Acknowledgment}
We   warmly   thank   S\'ebastien Labb\'e for  his  help with  numerical simulations.

\section{Multidimensional continued fraction algorithms}\label{sec:cf}
We first introduce the formalism of  multidimensional continued fraction algorithms  that will be used in the sequel. Observe that the algorithms we are dealing with in this paper mainly act  on sets of vectors whose entries are ordered (the only exception being the Jacobi--Perron  algorithm considered in Section~\ref{sec:JP}).
A~$d$-dimensional algorithm acts on a subset of the real vector space $\mathbb{R}^d$ for its renormalized version and on a subset of the real projective space $\mathbb{P}^d$ for its homogeneous version.
More precisely, for given $d \ge 2$ let 
\begin{align*}
\Lambda & = \{(y_0, y_1, \ldots ,y_d) \in \mathbb{R}^{d+1}\setminus\{\mathbf{0}\}:\, y_0 \ge y_1 \ge \cdots \ge y_d \ge 0\}, \\
\Delta & = \{(x_1, \ldots, x_d) \in \mathbb{R}^d:\, 1 \ge x_1 \ge \cdots \ge x_d \ge 0\}, 
\end{align*}
and the mappings
\begin{equation}\label{eq:iotakappa}
\begin{split}
\iota: &\ \Delta \to \Lambda, \quad (x_1,\ldots,x_d) \mapsto (1,x_1,\ldots,x_d),
\\
\kappa: &\ \Lambda \to \Delta, \quad (y_0, \ldots, y_d) \mapsto \Big(\frac{y_1}{y_0},\ldots,\frac{y_d}{y_0}\Big).
\end{split}
\end{equation}
Let the \emph{multidimensional continued fraction algorithm}
\[
A:\, \Delta \to \mathrm{GL}(d+1,\mathbb{Z}) 
\]
be defined in a way that the \emph{homogeneous version of the (ordered) multidimensional continued fraction algorithm}  ($\by=(y_0,\ldots,y_d)$)
\[
L:\, \Lambda \to \Lambda, \qquad \by \mapsto \by A(\kappa(\by))^{-1} = \by A\Big(\frac{y_1}{y_0},\ldots,\frac{y_d}{y_0}\Big)^{-1}
\]
is well defined (i.e., $L$ maps $\Lambda$ into itself). The \emph{projective version of the (ordered) multidimensional continued fraction algorithm $T$} is then defined by the commutative diagram
\[
\begin{CD}
\Lambda     @>L>>  \Lambda
\\@VV\kappa V        @VV\kappa V
\\\Delta     @>T>>  \Delta
\end{CD}
\]
We work with row vectors in the definition of the mappings $L$ and $T$ because this entails that
\[
A^{(n)}(\mathbf{x}) = A(T^{n-1}\mathbf{x}) \cdots \,A(T\mathbf{x})\,A(\mathbf{x})
\]
is a \emph{linear cocycle} which we shall call the \emph{cocycle associated with~$A$} (or just \emph{the cocycle $A$}). 
Indeed, $A^{(n)}$ fulfills the \emph{cocycle property} 
\begin{equation}\label{eq:cocA}
A^{(m+n)}(\bx) = A^{(m)}(T^n \bx) A^{(n)}(\bx).
\end{equation}
This cocycle produces the $d+1$ sequences of  rational convergents that are aimed to converge to~$\mathbf{x}$. Indeed, writing
\begin{equation} \label{e:An}
A^{(n)}(\mathbf{x}) = \begin{pmatrix}q_0^{(n)} & p_{0,1}^{(n)} & \cdots & p_{0,d}^{(n)} \\ q_1^{(n)} & p_{1,1}^{(n)} & \cdots & p_{1,d}^{(n)} \\ \vdots & \vdots & \ddots & \vdots \\ q_d^{(n)} & p_{d,1}^{(n)} & \cdots & p_{d,d}^{(n)}\end{pmatrix}
\end{equation}
and $\mathbf{p}_i^{(n)} = (p_{i,1}^{(n)}, \ldots, p_{i,d}^{(n)})$, we consider the convergence  of  $\lim_{n\to\infty} \mathbf{p}_i^{(n)}/q_i^{(n)} $  to $ \mathbf{x}$, $0 \le i \le d$.
The convergence is said to be  \emph{weak} if $\lim_{n\to\infty} \mathbf{p}_i^{(n)}/q_i^{(n)}= \mathbf{x}$ for all $i$ with $0 \le i \le d$, and \emph{strong} if $\lim_{n\to\infty} \vert \mathbf{p}_i^{(n)} - q_i^{(n)} \mathbf{x} \vert = 0$ for all  $0 \le i \le d$.

Since we focus on the action of the matrices produced by the algorithm on the  orthogonal space $\iota(\mathbf{x})^\bot$ of $\iota(\mathbf{x})$, we use left-multiplication for the description of the linear action in order to simplify notation and to avoid the use of the transpose.

Throughout this paper we suppose that a multidimensional continued fraction algorithm satisfies the following conditions which go back to Lagarias~\cite{Lagarias:93}. Similar to \cite{FS:19} we just explain them briefly and refer to Lagarias' paper for details. 
\begin{description}
\item[(H1) Ergodicity] 
The map $T$ admits  an ergodic  invariant probability measure~$\mu$ that is    absolutely continuous with respect to Lebesgue measure on $\Delta$.  
\item[(H2) Covering Property]  The map $T$ is piecewise continuous with non-vanishing Jacobian almost everywhere.
\item[(H3) Semi-weak convergence]  This is a mixing condition for $T$ which implies weak convergence. If $T$ admits a Markov partition it can be checked by making sure that the cylinders of the Markov partition decrease geometrically. For some examples this is worked out in \cite{Lagarias:93}.
See \cite{FS:19} for   a sufficient condition expressed   in terms of   the existence of   a special acceleration  providing  a simplex on which the induced algorithm is  uniformly expanding.
\item[(H4) Boundedness] This is log-integrability of the cocycle $A$, i.e.,
finiteness of the expectation of $\log( \max (\Vert A\Vert,1)$. This is necessary in order to apply the Oseledets Theorem.

\item[(H5) Partial quotient mixing] This condition says that the expectation of  the number $n$ for which $A^{(n)}(\mathbf{x})$ becomes a strictly positive matrix is finite.
\end{description}

Throughout the paper the Lyapunov exponents of the cocycle~$A$ are denoted as
\[
\lambda_1(A) \ge \lambda_2(A) \ge \cdots \ge  \lambda_{d+1}(A).
\]

Our motivation for studying the second Lyapunov exponent is due to the fact that it is related to the \emph{uniform approximation exponent}, a quantity that estimates the rate of convergence of a continued fraction algorithm. We recall the definition of this object; see also \cite[Definition~38]{Schweiger:00} or \cite[Section~1]{Lagarias:93}. 

Let $A$ be a multidimensional continued fraction algorithm with cocycle $A^{(n)}$ given as in \eqref{e:An}. For $\bx\in\Delta$ and $i\in\{0,\ldots,d\}$ set (for an arbitrary norm $\Vert\cdot\Vert$ in $\mathbb{R}^d$)
\[
\eta_A^*(\bx,i) = \sup
\bigg\{
\delta >0 \;:\; \exists\, n_0=n_0(\bx,i,\delta) \in\mathbb{N} \hbox{ s.\ t.\ }\forall \, 
n\ge n_0,\; \bigg\Vert  \bx-\frac{\mathbf{p}_i^{(n)}}{q_i^{(n)}} \bigg\Vert < (q_i^{(n)})^{-\delta}
\bigg\}.
\]
Then
\[
\eta^*_A(\bx) = \min_{0\le i\le d} \eta_A^*(\bx,i)
\]
is called the \emph{uniform approximation exponent} for $\bx$ using the algorithm $A$.

The following result  links the second Lyapunov exponent with the uniform approximation exponent; see \cite[Theorem~1]{HK00} for a variant of this result and \cite[Proposition~4]{Baldwin:92}.

\begin{proposition}[{\cite[Theorem~4.1]{Lagarias:93}}]\label{prop:lag}
Let $\eta^*_A$ be the uniform approximation exponent of a $d$-dimensional multidimensional continued fraction algorithm $A$ satisfying conditions (H1) to (H5). We have $\lambda_1(A) > \lambda_2(A)$ and 
\[
\eta^*_A(\mathbf{x}) = 1- \frac{\lambda_2(A)}{\lambda_1(A)}
\]
holds for almost all $\mathbf{x}\in\Delta$. In particular, if $\lambda_2(A) < 0$ then $A$ is a.e.\ strongly convergent.
\end{proposition}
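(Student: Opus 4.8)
The plan is to convert the uniform approximation exponent into a ratio of Lyapunov exponents by pinning down the precise exponential growth rates of the denominators $q_i^{(n)}$ and of the error vectors $\mathbf{e}_i^{(n)} := \mathbf{p}_i^{(n)} - q_i^{(n)}\mathbf{x}$. First I would record the identity $\|\mathbf{x} - \mathbf{p}_i^{(n)}/q_i^{(n)}\| = \|\mathbf{e}_i^{(n)}\| / q_i^{(n)}$, so that the defining inequality in $\eta_A^*(\mathbf{x},i)$ becomes $\log\|\mathbf{e}_i^{(n)}\| < (1-\delta)\log q_i^{(n)}$. Dividing by $n$ and letting $n\to\infty$, the whole statement reduces to establishing, for $\mu$-almost every $\mathbf{x}$, the three facts: (i) $\tfrac1n\log q_i^{(n)} \to \lambda_1 > 0$ for every $i$; (ii) $\tfrac1n\log\|\mathbf{e}_i^{(n)}\| \to \rho_i$ with $\rho_i \le \lambda_2$ for every $i$ and $\rho_i = \lambda_2$ for at least one index; and (iii) $\lambda_1 > \lambda_2$.

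For (i) and (iii) I would invoke the Oseledets theorem for the cocycle $A^{(n)}$, which is legitimate by ergodicity (H1) and log-integrability (H4), yielding the exponents $\lambda_1 \ge \cdots \ge \lambda_{d+1}$ and the Oseledets filtration almost everywhere. The partial quotient mixing (H5), which guarantees that $A^{(n)}(\mathbf{x})$ becomes strictly positive after an integrable number of steps, is exactly what drives a Perron--Frobenius / Birkhoff--Hilbert contraction argument: a strictly positive matrix contracts the Hilbert metric on the positive cone, which forces the top exponent to be simple (hence (iii)) and forces the normalized rows of $A^{(n)}(\mathbf{x})$ to converge to a single direction. By the weak convergence supplied by (H3) this direction is $\iota(\mathbf{x})$; since its first coordinate equals $1$, every denominator $q_i^{(n)}$ inherits the top growth rate, giving $\tfrac1n\log q_i^{(n)} \to \lambda_1$. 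Positivity of $\lambda_1$ then follows because $\det A^{(n)}(\mathbf{x}) = \pm1$ forces $\sum_j \lambda_j = 0$ while the $q_i^{(n)}$ tend to infinity.

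The heart of the proof is fact (ii), the identification of the error rate with $\lambda_2$; this is where I would exploit the action on the hyperplane complementary to $\iota(\mathbf{x})$. Writing $P_{\mathbf{x}} = (-\tr{\mathbf{x}} \mid I_d)$ for the $d \times (d+1)$ matrix with $P_{\mathbf{x}}\,\tr{\iota(\mathbf{x})} = \mathbf{0}$, the rows of $A^{(n)}(\mathbf{x})\,\tr{P_{\mathbf{x}}}$ are exactly the error vectors $\mathbf{e}_i^{(n)}$. Using the conjugacy $\iota(T^n\mathbf{x})\,A^{(n)}(\mathbf{x}) \propto \iota(\mathbf{x})$ read off from the commutative diagram defining $T$, one checks that $P_{\mathbf{x}}\,\tr{A^{(n)}(\mathbf{x})} = C_n\,P_{T^n\mathbf{x}}$ for a square matrix $C_n$ that satisfies the cocycle property over $(T,\mu)$. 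Thus $C_n$ is precisely the cocycle induced by $A^{(n)}$ on the quotient of $\mathbb{R}^{d+1}$ by the equivariant line spanned by $\iota(\mathbf{x})$, which carries the top exponent $\lambda_1$; its Lyapunov exponents are therefore $\lambda_2 \ge \cdots \ge \lambda_{d+1}$ and $\tfrac1n\log\|C_n\| \to \lambda_2$. Because $T^n\mathbf{x}$ remains in the compact set $\Delta$, the matrix $P_{T^n\mathbf{x}}$ and a one-sided inverse of it are uniformly bounded, so the error matrix $A^{(n)}(\mathbf{x})\,\tr{P_{\mathbf{x}}} = \tr{P_{T^n\mathbf{x}}}\,\tr{C_n}$ grows at the same rate $\lambda_2$; comparing its operator norm with its individual row norms then yields $\rho_i \le \lambda_2$ for all $i$, with equality for the row realizing the norm. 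This is (ii), and the existence of each limit $\rho_i$ again comes from Oseledets.

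Combining the three facts gives $\eta_A^*(\mathbf{x},i) = 1 - \rho_i/\lambda_1 \ge 1 - \lambda_2/\lambda_1$, so the minimum over $i$ equals $1 - \lambda_2/\lambda_1$, as claimed; and when $\lambda_2 < 0$ the bound $\|\mathbf{e}_i^{(n)}\| \le e^{n\lambda_2 + o(n)} \to 0$ holds for every $i$, which is strong convergence for almost every $\mathbf{x}$. The step I expect to be the main obstacle is the sharp half of (ii): proving that the error genuinely attains the rate $\lambda_2$ rather than undershooting it, i.e.\ that for almost every $\mathbf{x}$ the error vectors are not trapped in a faster-contracting Oseledets subspace associated with $\lambda_3, \ldots, \lambda_{d+1}$. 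Controlling this requires that the induced cocycle $C_n$ be genuinely expanding in its top direction on a positive-measure set, which is where the mixing and positivity in (H3) and (H5), together with the covering property (H2), must be used; identifying the induced exponents with $\lambda_2, \ldots, \lambda_{d+1}$ and verifying the cocycle property of $C_n$ over the non-invertible map $T$ is the delicate bookkeeping underlying this step.
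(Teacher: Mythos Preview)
The paper does not give its own proof of this proposition; it is quoted verbatim from \cite[Theorem~4.1]{Lagarias:93} and used as a black box. Your outline is essentially the argument that Lagarias carries out, and the induced cocycle $C_n$ you construct on $\iota(\mathbf{x})^\perp$ coincides, up to transposition and choice of basis, with the cocycle $D^{(n)}(\mathbf{x}) = \Pi\,A^{(n)}(\mathbf{x})\,H(\mathbf{x})$ that the paper introduces in Section~\ref{sec:second}: your matrix $P_{\mathbf{x}}$ is the transpose of~$H(\mathbf{x})$. The paper does reprove the cocycle property of $D^{(n)}$ and the identity $\lambda_1(D) = \lambda_2(A)$ (see the discussion around \eqref{eq:DnAn}--\eqref{eq:HPweg} and Remark~\ref{r:lambdaDA}), but it defers the remaining ingredients---simplicity of $\lambda_1$ via Hilbert-metric contraction under (H5), the common growth rate $\tfrac{1}{n}\log q_i^{(n)} \to \lambda_1$ for all~$i$, and the matching lower bound showing some row of the error matrix actually realises~$\lambda_2$---to Lagarias' paper. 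Your flagging of the sharp half of~(ii) as the genuine obstacle is accurate; that is where (H2), (H3), (H5) are all needed, and it is the part that neither you nor the present paper spell out.
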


We wish to show that $\lambda_2(A) < 0$ for various  classical  multidimensional continued fraction algorithms $A$. To apply Proposition~\ref{prop:lag} we need to make sure that these algorithms satisfy conditions (H1) to (H5). As we will see in the subsequent sections, these conditions are known to hold for most of the algorithms we discuss (and will be treated for the remaining ones in a forthcoming paper).

\section{The second Lyapunov exponent}\label{sec:second}
The action of a   continued fraction algorithm is given by a matrix~$A$  acting  by  left-multiplication  on some direction.
To understand the quality of approximation, it is useful to work on the orthogonal of this direction.
The action on the orthogonal is then given by the matrix~$A$ acting by right-multiplication.
We are thus interested  in the action of the matrix~$A$ and  of the associated cocyle  on a restricted hyperplane.
By choosing a suitable basis of this hyperplane, the action of the algorithm is  then  described as a matrix that involves the usual  differences that have the form $q_n  x -p_n$ in the one-dimensional case, where $p_n/q_n$ are the convergents of $x$.

In order to give estimates of the second Lyapunov exponent of a multidimensional continued fraction algorithm 
we follow the ideas of Hardcastle and Khanin~\cite{Hardcastle:02,HK:02} who built on the work of Lagarias~\cite{Lagarias:93}.  

Since $T$ is ergodic by (H1), the Lyapunov exponents of~$A$ are the same for almost all $\mathbf{x} \in \Delta$ w.r.t.\ the invariant measure of~$T$. Under the conditions of Proposition~\ref{prop:lag}, the Oseledets Theorem gives, for generic $\mathbf{x} \in \Delta$,
\[
\lim_{n\to \infty} \frac1n \log\Vert A^{(n)}(\mathbf{x})\, \mathbf{v} \Vert \le  \lambda_2(A) \quad \mbox{if and only if} \quad \mathbf{v} \in \iota(\mathbf{x})^\perp,
\]
where $\mathbf{y}^\perp = \{\mathbf{v} \in \mathbb{R}^{d+1}:\, \mathbf{y} \mathbf{v} = 0\}$; see \cite[Theorem 4.1]{Lagarias:93} for more details. 
Note that $A^{(n)}(\mathbf{x})\, \iota(\mathbf{x})^\perp = \iota(T^n\mathbf{x})^\perp$.
Using the notation in~\eqref{e:An}, the matrix $D^{(n)}(\mathbf{x})$ defined~by
\begin{equation}\label{eq:Dn}
D^{(n)}(x_1,x_2,\ldots,x_d) = \begin{pmatrix}p_{1,1}^{(n)} - q_1^{(n)} x_1 & \cdots & p_{1,d}^{(n)} - q_1^{(n)} x_d \\ \vdots & \ddots & \vdots \\ p_{d,1}^{(n)} - q_d^{(n)} x_1 & \cdots & p_{d,d}^{(n)} - q_d^{(n)} x_d\end{pmatrix}
\end{equation}
is a cocycle of~$T$ \cite[Proposition~4.1]{HK:02} satisfying $\lambda_2(A) = \lambda_1(D)$ \cite[Lemma~3.1]{HK:02}.
For the sake of self-containedness, we prove the cocycle property here (see (\ref{eq:cocA}))  and the equality of the Lyapunov exponents in Remark~\ref{r:lambdaDA} below.
We have 
\begin{equation}\label{eq:DnAn}
D^{(n)}(\mathbf{x}) = \Pi\, A^{(n)}(\mathbf{x}) H(\mathbf{x}),
\end{equation}
with
\[
\Pi = \begin{pmatrix}0 & 1 & 0 & \cdots & 0 \\ \vdots & \ddots & \ddots & \ddots & \vdots \\ \vdots & & \ddots & 1 & 0 \\ 0 & \cdots & \cdots & 0 & 1\end{pmatrix}, \quad 
H(x_1,x_2,\ldots,x_d) = \begin{pmatrix}-x_1 & -x_2 & \cdots & -x_d \\ 1 & 0 & \cdots & 0 \\ 0 & 1 & \ddots & \vdots \\ \vdots & \ddots & \ddots & 0 \\ 0 & \cdots & 0 & 1\end{pmatrix}.
\]
and we also have that 
\begin{equation}\label{eq:HPweg}
H(T^n \mathbf{x})\, \Pi\, A^{(n)}(\mathbf{x}) H(\mathbf{x}) = A^{(n)}(\mathbf{x}) H(\mathbf{x}).
\end{equation}
Indeed, the matrix $I_{d+1} - H(T^n \mathbf{x})\, \Pi$ is zero except for the first row, which is~$\iota(T^n \mathbf{x})$, and we have $\iota(T^n \mathbf{x}) A^{(n)}(\mathbf{x}) H(\mathbf{x}) = \iota(\mathbf{x}) H(\mathbf{x}) = \mathbf{0}$.
Using \eqref{eq:DnAn}, \eqref{eq:HPweg}, and \eqref{eq:cocA} we obtain that
\[
D^{(m)}(T^n\mathbf{x}) D^{(n)}(\mathbf{x}) = \Pi\, A^{(m)}(T^n\mathbf{x}) A^{(n)}(\mathbf{x}) H(\mathbf{x}) = D^{(m+n)}(\mathbf{x}),
\]
thus $D^{(n)}(\mathbf{x})$ is a cocycle of~$T$. 

Therefore, it suffices to estimate the first Lyapunov exponent of the cocycle $D^{(n)}(\mathbf{x})$. This is convenient because it is usually easier to obtain  estimates for the first Lyapunov exponent of a cocycle than for the second one. 
As observed by Hardcastle and Khanin~\cite{Hardcastle:02,HK:02}, the Subadditive Ergodic Theorem yields that 
\begin{equation}\label{eq:int}
\lambda_2(A) = \lambda_1(D) = \inf_{n\in\mathbb{N}} \frac{1}{n} \int_{\Delta} \log \| D^{(n)}(\mathbf{x})\|\, \mathrm{d}\mu(\mathbf{x})
\end{equation}
for any matrix norm, see \cite[Lemma~3.3]{HK:02}. 
We note that the matrices $D^{(n)}(\bx)$ were first studied by Fujita, Ito, Keane, and Ohtsuki \cite{FIKO,IKO}.
Observe also  that strong convergence at a point $\mathbf{x}$ is equivalent to $\lim_{n\to\infty} \Vert  D^{(n)}(\mathbf{x})\Vert = 0$.
Indeed, $\lim_{n\to\infty} \Vert  D^{(n)}(\mathbf{x})\Vert = 0$ means that $\lim _{n\to\infty}  \vert p_{i,j}^{(n)} -q_i^{(n)} x_j \vert=0$ for $i,j\ge 1$, and we then use the orthogonality of the columns of $A^{(n)}(\mathbf{x}) H(\mathbf{x}) $ to $\iota (T^n \mathbf{x})$ to deduce that  $ \lim_{n\to\infty} \vert p_{0,j}^{(n)} -q_0^{(n)} x_j\vert = 0$ for $ j\geq 1$.

There  exist several methods for      providing  numerical estimates  for  the computation of the second Lyapunov exponent.
The  approach of  \cite{Baladi-Nogueira:96}, which  is inspired  by  \cite{JPS:87},  is based on   the decomposition  of matrices
 as a product of a unitary  matrix $ Q$ and an upper triangular matrix $R$.   
For low dimensions~$d$, we can  also evaluate the integrals in  (\ref{eq:int})  symbolically (using polylogarithms) with a computer algebra software such as \texttt{Mathematica} or use estimates for the measure~$\mu$ to show that $\lambda_2(A) < 0$ for some continued fraction algorithms, in particular for the Selmer algorithm.  
Indeed, the densities of invariant measures have simple particular forms; see e.g.\ \eqref{eq:mu} below.
For higher dimensions, these calculations take too much time and we can only make simulations of the behaviour of $D^{(n)}(\mathbf{x})$ for randomly chosen points~$\mathbf{x}$.  According to these simulations, 
it seems that we have $\lambda_2(A) > 0$ for all known continued fraction algorithms when $d$ gets large, contrary to conjectures of e.g.\ \cite{Lagarias:93,Hardcastle:02}.

\section{On the Paley--Ursell inequality}\label{sec:PU}

We recall that the notation $ f_n  \ll  g_n $ means that  there exists $C>0$ such that
$f_n \leq C g_n$  for all $n$. Let $A$ be a multidimensional continued fraction algorithm and let $D^{(n)}$ be as in \eqref{eq:Dn}. For certain algorithms $A$ we have
\begin{equation}\label{eq:Dnbound}
\Vert D^{(n)} (\mathbf{x}) \Vert \ll 1 \hbox{ uniformly for all } \mathbf{x}, 
\end{equation}
which is a form of the \emph{Paley--Ursell inequality}, going back to Paley and Ursell \cite{PU:30}. This inequality essentiall says that the second Lyapunov exponent of the algorithm is nonpositive. This inequality can be formulated in terms of an inequality for $D^{(n)}$ as in \eqref{eq:Dnbound}, or for the minors of size $2$ of the matrices~$A^{(n)}$ (see~Proposition~\ref{prop:PU} below).

Recall that  \eqref{eq:Dnbound} means that $| p_{i,j}^{(n)} - q_i ^{(n)}  x_j |  \ll 1$ for all $i,j\in\{1,\ldots, d\}$ uniformly in $ {\bf x}$. Thus the Paley--Ursell inequality is a statement on the quality of the approximation of a sequence of convergents. In this section we  discuss the relations between different forms of the Paley--Ursell inequality. In particular, we show that \eqref{eq:Dnbound} implies an inequality bounding the norm of the second exterior product of $A^{(n)}$ in terms of the norm of~$A^{(n)}$. 

We will see in Section~\ref{sec:Selmer} that \eqref{eq:Dnbound} holds for Selmer in dimension $d=2$. It also holds for Brun in dimension $d=2$ and for Arnoux--Rauzy for arbitrary dimension $d\ge 2$ according to Avila and Delecroix~\cite{AD15} and Remark~\ref{r:AD} below.
The original version in~\cite{PU:30} is proved for Jacobi--Perron in dimension $d=2$. 
In the form we state it below, it is contained in Broise and Guivarc'h~\cite{BG:01}. 

Contrary to the results we discussed in the previous section, the results of this section are true for \emph{all} $\mathbf{x}\in\Delta$ (except pathological cases when the algorithm  terminates  and is not defined). The price we have to pay for getting a result that is valid everywhere is that it is weaker than the metric results we expect to be true. Indeed, while Section~\ref{sec:second} is tailored to be the starting point for proving that $\lambda_2(A) < 0$ almost everywhere, inequality \eqref{eq:Dnbound} implies that $\lambda_2(A) \le 0$ {\em everywhere}. Moreover, \eqref{eq:Dnbound} is true for each time $n$ in an orbit and not only in the limit. 
 
In the following proposition $\wedge^2$ denotes the second exterior product.

\begin{proposition}\label{prop:PU}
Consider a multidimensional continued fraction algorithm  satisfying conditions (H1) to (H5).
If $\Vert D^{(n)} (\mathbf{x}) \Vert \ll 1$ holds uniformly in~$\mathbf{x}$,  then 
\begin{equation}\label{eq:PUA}
\Vert \wedge^2 A ^{(n)} (\mathbf{x}) \Vert \ll \Vert A ^{(n)}(\mathbf{x})\Vert
\end{equation}
holds uniformly in ${\mathbf x}$.
\end{proposition}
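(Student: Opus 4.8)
The plan is to express the entries of $\wedge^2 A^{(n)}(\mathbf{x})$ as $2\times 2$ minors of $A^{(n)}(\mathbf{x})$ and to show that each such minor is controlled by a single entry of $D^{(n)}(\mathbf{x})$ times a single entry of $A^{(n)}(\mathbf{x})$; the hypothesis $\Vert D^{(n)}(\mathbf{x})\Vert \ll 1$ then immediately yields \eqref{eq:PUA}. The key algebraic identity I would exploit is the factorization $D^{(n)}(\mathbf{x}) = \Pi\, A^{(n)}(\mathbf{x}) H(\mathbf{x})$ from \eqref{eq:DnAn}. Writing out this product in coordinates, the $(i,j)$-entry of $D^{(n)}$ is $p_{i,j}^{(n)} - q_i^{(n)} x_j$, so each entry of $D^{(n)}$ encodes the ``renormalized difference'' in which one subtracts $x_j$ times the zeroth column of $A^{(n)}$ from its $j$th column.

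First I would observe that the $2\times 2$ minor of $A^{(n)}(\mathbf{x})$ formed from rows $i,k$ and columns $0,j$ is
\[
q_i^{(n)} p_{k,j}^{(n)} - q_k^{(n)} p_{i,j}^{(n)}.
\]
The crucial point is that this quantity is unchanged if one replaces $p_{i,j}^{(n)}$ by $p_{i,j}^{(n)} - q_i^{(n)} x_j$ and $p_{k,j}^{(n)}$ by $p_{k,j}^{(n)} - q_k^{(n)} x_j$: indeed, the extra terms cancel because
\[
q_i^{(n)}\bigl(p_{k,j}^{(n)} - q_k^{(n)} x_j\bigr) - q_k^{(n)}\bigl(p_{i,j}^{(n)} - q_i^{(n)} x_j\bigr) = q_i^{(n)} p_{k,j}^{(n)} - q_k^{(n)} p_{i,j}^{(n)}.
\]
Hence each such minor equals $q_i^{(n)} D^{(n)}_{k,j} - q_k^{(n)} D^{(n)}_{i,j}$, an expression bounded by $2\,\Vert A^{(n)}(\mathbf{x})\Vert\,\Vert D^{(n)}(\mathbf{x})\Vert$ since the $q_i^{(n)}$ are entries of $A^{(n)}$. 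For minors involving two columns $j,j'$ both different from the zeroth, I would first reduce to the previous case: expanding $p_{i,j}^{(n)} = D^{(n)}_{i,j} + q_i^{(n)} x_j$ and likewise for the three other entries, the minor becomes a combination of the minors already treated plus a term proportional to the column-$0$ minor $q_i^{(n)} q_k^{(n)} - q_k^{(n)} q_i^{(n)} = 0$; the surviving terms again carry a factor $D^{(n)}$ and are thus $\ll \Vert A^{(n)}(\mathbf{x})\Vert$. Since every entry of $\wedge^2 A^{(n)}(\mathbf{x})$ is one of these minors, summing over the finitely many index pairs (with $d$ fixed) gives \eqref{eq:PUA} with a constant depending only on $d$ and on the implied constant in $\Vert D^{(n)}(\mathbf{x})\Vert \ll 1$.

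The main obstacle I anticipate is purely bookkeeping rather than conceptual: one must handle the two families of minors (those meeting column $0$ and those avoiding it) separately and verify that in the second family the ``diagonal'' $x_j x_{j'}$ terms cancel so that no uncontrolled quadratic-in-$q$ contribution survives. Care is also needed with the choice of matrix norm — I would fix an entrywise or operator norm and use the elementary comparison that the max-entry norm of $\wedge^2 A^{(n)}$ is the maximum of the absolute values of the $2\times 2$ minors, all matrix norms being equivalent for fixed $d$. Once the cancellation is checked, the inequality follows at once from the uniform boundedness hypothesis.
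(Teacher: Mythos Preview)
Your argument is essentially correct and gives an elementary, entry-by-entry proof, but it differs from the paper's route. The paper proceeds via singular values: it first proves a geometric lemma asserting $\delta_2(A^{(n)}(\mathbf{x})) \ll \delta_1(D^{(n)}(\mathbf{x}))$ by tracking how the unit sphere is deformed under $H(\mathbf{x})$, then $A^{(n)}(\mathbf{x})$, then the projection~$\Pi$, using that the angle between $\iota(T^n\mathbf{x})^\perp$ and the coordinate hyperplane is bounded below. The inequality~\eqref{eq:PUA} then falls out from $\|\wedge^2 A^{(n)}\|_2 = \delta_1(A^{(n)})\,\delta_2(A^{(n)})$. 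Your approach bypasses singular values entirely and manipulates the $2\times 2$ minors directly via the substitution $p_{i,j}^{(n)} = (p_{i,j}^{(n)} - q_i^{(n)} x_j) + q_i^{(n)} x_j$; the cancellation of the $q_i q_k x_j x_{j'}$ terms is exactly the mechanism that makes it work. Your route is more hands-on and avoids any spectral input, while the paper's argument is more conceptual and explains \emph{why} the bound holds (the second singular direction lives in $\iota(\mathbf{x})^\perp$, where the cocycle acts like~$D^{(n)}$).

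One genuine gap you should patch: the matrix $D^{(n)}(\mathbf{x})$ defined in~\eqref{eq:Dn} is $d\times d$ and records only the differences $p_{i,j}^{(n)} - q_i^{(n)} x_j$ for $1\le i\le d$; it does not contain the row $i=0$. Your minor computation, however, runs over \emph{all} row pairs $0\le i<k\le d$, so you implicitly use quantities ``$D^{(n)}_{0,j}$'' that are not entries of $D^{(n)}$ and are not covered by the hypothesis $\|D^{(n)}(\mathbf{x})\|\ll 1$. This is easily repaired: the columns of $A^{(n)}(\mathbf{x})H(\mathbf{x})$ lie in $\iota(T^n\mathbf{x})^\perp$, so
\[
p_{0,j}^{(n)} - q_0^{(n)} x_j \;=\; -\sum_{i=1}^d (T^n\mathbf{x})_i\,\bigl(p_{i,j}^{(n)} - q_i^{(n)} x_j\bigr),
\]
and since $|(T^n\mathbf{x})_i|\le 1$ this gives $|p_{0,j}^{(n)} - q_0^{(n)} x_j|\le d\,\|D^{(n)}(\mathbf{x})\|_\infty \ll 1$. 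With this line added, your argument goes through.
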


This result implies that $A^{(n)}(\mathbf{x})$ maps the unit sphere in $\mathbb{R}^n$ to an ellipsoid whose second largest semi-axis $\delta_2(A ^{(n)}(\mathbf{x}))$ is uniformly bounded in $\mathbf{x}\in\Delta$ and $ n\in \mathbb{N}$.  Moreover, since the elements of $\wedge^2 A ^{(n)} (\mathbf{x})$ are the $2\times 2$ minors of $A ^{(n)} (\mathbf{x})$ this inequality shows that the $2\times 2$ minors of $A ^{(n)} (\mathbf{x})$ cannot be much larger than its elements. 

To prove this result we need the following preparatory lemma. We write $\delta_i(M)$ for the $i$-th largest singular value of a $k\times k$ matrix~$M$ ($1\le i\le k$, $k\in \mathbb{N}$).  

\begin{lemma}\label{lem:singular}
The inequality 
\[
\delta_2(A ^{(n)}(\mathbf{x})) \ll \delta_1(D^{(n)}(\mathbf{x}))
\]
holds uniformly for all $\mathbf{x} \in\Delta$. 
\end{lemma}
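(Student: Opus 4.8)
The plan is to compare the second singular value of the $(d+1)\times(d+1)$ matrix $A^{(n)}(\mathbf{x})$ with the norm of the action of $A^{(n)}(\mathbf{x})$ on the hyperplane $\iota(\mathbf{x})^\perp$, and then to identify this restricted action with the $d\times d$ matrix $D^{(n)}(\mathbf{x})$ through the factorization \eqref{eq:DnAn}. All singular values are taken with respect to the Euclidean norm; since $d$ is fixed, passing to any other norm only changes the implied constants.

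First I would use the Courant--Fischer min--max characterization of singular values. For any $(d+1)\times(d+1)$ matrix $M$ one has $\delta_2(M) = \min_{\dim W = d}\, \max_{\mathbf{v}\in W,\ \|\mathbf{v}\|=1} \|M\mathbf{v}\|$, where $W$ ranges over $d$-dimensional subspaces of $\mathbb{R}^{d+1}$. Since $\iota(\mathbf{x}) = (1,x_1,\dots,x_d)\neq\mathbf{0}$, the hyperplane $\iota(\mathbf{x})^\perp$ has dimension exactly $d$, so taking $W = \iota(\mathbf{x})^\perp$ yields
\[
\delta_2\big(A^{(n)}(\mathbf{x})\big) \le \max_{\mathbf{v}\in\iota(\mathbf{x})^\perp,\ \|\mathbf{v}\|=1} \big\|A^{(n)}(\mathbf{x})\,\mathbf{v}\big\|.
\]

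Next I would express the right-hand side through $D^{(n)}(\mathbf{x})$. The columns of $H(\mathbf{x})$ span $\iota(\mathbf{x})^\perp$, so every $\mathbf{v}\in\iota(\mathbf{x})^\perp$ is of the form $\mathbf{v}=H(\mathbf{x})\mathbf{u}$ with $\mathbf{u}\in\mathbb{R}^d$, and $A^{(n)}(\mathbf{x})\mathbf{v} = A^{(n)}(\mathbf{x})H(\mathbf{x})\mathbf{u}$. Because the bottom $d\times d$ block of $H(\mathbf{x})$ is the identity, $\|H(\mathbf{x})\mathbf{u}\|^2 = (\mathbf{x}\cdot\mathbf{u})^2 + \|\mathbf{u}\|^2 \ge \|\mathbf{u}\|^2$, so the smallest singular value of $H(\mathbf{x})$ is at least~$1$, uniformly in $\mathbf{x}$. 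It then remains to bound $\|A^{(n)}(\mathbf{x})H(\mathbf{x})\mathbf{u}\|$ by $\|D^{(n)}(\mathbf{x})\mathbf{u}\|$ up to a constant; by \eqref{eq:DnAn} the vector $D^{(n)}(\mathbf{x})\mathbf{u}$ consists precisely of the last $d$ coordinates of $A^{(n)}(\mathbf{x})H(\mathbf{x})\mathbf{u}$, the only discarded one being the first.

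The crux---and the step demanding the most care for uniformity---is to control this discarded first coordinate. Since the columns of $A^{(n)}(\mathbf{x})H(\mathbf{x})$ lie in $\iota(T^n\mathbf{x})^\perp$, writing $\mathbf{w}=A^{(n)}(\mathbf{x})H(\mathbf{x})\mathbf{u}=(w_0,w_1,\dots,w_d)$ and using $\iota(T^n\mathbf{x})\mathbf{w}=0$ gives $w_0 = -\sum_{j=1}^d (T^n\mathbf{x})_j\,w_j$. As $T$ maps $\Delta$ into $\Delta$, all coordinates of $T^n\mathbf{x}$ lie in $[0,1]$, whence $|w_0|\le\sum_{j=1}^d|w_j| \le \sqrt{d}\,\|D^{(n)}(\mathbf{x})\mathbf{u}\|$ and therefore $\|\mathbf{w}\|^2 = w_0^2 + \|D^{(n)}(\mathbf{x})\mathbf{u}\|^2 \le (d+1)\,\|D^{(n)}(\mathbf{x})\mathbf{u}\|^2$. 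Combining the three bounds yields
\[
\delta_2\big(A^{(n)}(\mathbf{x})\big) \le \max_{\mathbf{u}\neq\mathbf{0}} \frac{\|A^{(n)}(\mathbf{x})H(\mathbf{x})\mathbf{u}\|}{\|H(\mathbf{x})\mathbf{u}\|} \le \sqrt{d+1}\,\max_{\mathbf{u}\neq\mathbf{0}}\frac{\|D^{(n)}(\mathbf{x})\mathbf{u}\|}{\|\mathbf{u}\|} = \sqrt{d+1}\;\delta_1\big(D^{(n)}(\mathbf{x})\big),
\]
and since $\sqrt{d+1}$ is independent of $n$ and $\mathbf{x}$, this is exactly the asserted uniform inequality. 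I do not expect any genuine obstacle beyond bookkeeping: the argument is elementary linear algebra, and the single point requiring attention is that each constant used ($\dim\iota(\mathbf{x})^\perp = d$, the lower bound on the singular values of $H(\mathbf{x})$, and the $[0,1]$-bound on the coordinates of $T^n\mathbf{x}$) is genuinely independent of $\mathbf{x}$ and $n$.
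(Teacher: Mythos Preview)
Your proof is correct. The underlying idea is the same as the paper's: restrict $A^{(n)}(\mathbf{x})$ to the hyperplane $\iota(\mathbf{x})^\perp$ via the parametrization by $H(\mathbf{x})$, and then control the loss under the projection~$\Pi$ by using that the image lies in $\iota(T^n\mathbf{x})^\perp$ with $\iota(T^n\mathbf{x})=(1,\ldots)$. The presentation, however, differs. The paper argues geometrically, tracking how the unit sphere is deformed into ellipsoids under $H(\mathbf{x})$, $A^{(n)}(\mathbf{x})$, and~$\Pi$, and invokes an angle bound between $\iota(T^n\mathbf{x})^\perp$ and $(1,0,\ldots,0)^\perp$ to show that $\Pi$ shrinks vectors by at most a fixed factor. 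Your route is more algebraic: you invoke the Courant--Fischer characterization of $\delta_2$ directly, bound the smallest singular value of $H(\mathbf{x})$ by~$1$ via an explicit computation, and recover the discarded first coordinate $w_0$ from the orthogonality relation $\iota(T^n\mathbf{x})\mathbf{w}=0$. The payoff is an explicit constant $\sqrt{d+1}$ in place of the paper's implicit $\ll$, and the argument is arguably cleaner; the paper's ellipsoid picture, on the other hand, makes the geometric content more visible and in principle gives the stronger chain $\delta_{i+1}(A^{(n)}(\mathbf{x}))\ll\delta_i(D^{(n)}(\mathbf{x}))$ for all~$i$.
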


\begin{proof}
Recall that $D^{(n)}(\mathbf{x})=\Pi A^{(n)}(\mathbf{x})H(\mathbf{x})$.  In order to estimate the singular values of $D^{(n)}(\mathbf{x})$, we map the unit ball  $\mathbb{S}^{d-1}$ in $\mathbb{R}^d$ step by step by the matrices $H(\mathbf{x})$, $A ^{(n)}(\mathbf{x})$, and $\Pi$, and keep track of the length of the semi-axes of the ellipsoids  which are deformed. The ellipsoid $H(\mathbf{x})\mathbb{S}^{d-1}$ is a subset of the hyperplane $\iota(\mathbf{x})^\bot$ whose semi-axes $\mathbf{a}_i^{(1)}$ satisfy $1\ll \Vert \mathbf{a}_i^{(1)} \Vert \ll 1$ ($1 \le i\le d$). 
By the definition of the singular values $\delta_i(A ^{(n)}(\mathbf{x}))$ ($1 \le i \le d+1$), this implies that the ellipse $A^{(n)}(\mathbf{x}) H(\mathbf{x})\mathbb{S}^{d-1} \subset \iota(T^n\mathbf{x})^\bot$ has semi-axes $\mathbf{a}_i^{(2)}$ satisfying 
\begin{equation}\label{eq:qi2est}
\Vert \mathbf{a}_i^{(2)} \Vert \gg \delta_{i+1}(A ^{(n)} (\mathbf{x})) \qquad (1 \le i \le d).
\end{equation}
It remains to apply the projection~$\Pi$. 
Since $\iota(T^n\mathbf{x})=(1,y_1,\ldots,y_d)$ with $|y_i|\le 1$ ($1 \le i \le d$), the angle between the hyperplanes $\iota(T^n\mathbf{x})^\bot$ and $(1,0,\ldots,0)^\bot$ of $\mathbb{R}^{d+1}$ is greater than $c>0$ for some constant $c$ not depending on $n$. Thus the projection $\Pi$ shrinks each vector $\mathbf{v}\in \iota(T^n\mathbf{x})^\bot$ by a factor which is greater than or equal to $\sin c$. Thus, because $A (\mathbf{x}) H(\mathbf{x})\mathbb{S}^{d-1} \subset \iota(T^n\mathbf{x})^\bot$ we get from $\eqref{eq:qi2est}$ that
\[
\delta_2(A ^{(n)} (\mathbf{x})) \ll \delta_1(\Pi A ^{(n)} (\mathbf{x})H(\mathbf{x})) = \delta_1(D^{(n)}(\mathbf{x})). \qedhere
\]
\end{proof}

We can now finish the proof of Proposition~\ref{prop:PU}.

\begin{proof}[Proof of Proposition~\ref{prop:PU}]
Suppose that $\Vert D^{(n)}(\mathbf{x}) \Vert \ll 1$ holds. Lemma~\ref{lem:singular} implies that
\begin{align*}
\Vert \wedge^2 A ^{(n)} (\mathbf{x}) \Vert_2 &= \delta_1(\wedge^2 A ^{(n)} (\mathbf{x})) = \delta_1(A ^{(n)} (\mathbf{x})) \delta_2(A ^{(n)} (\mathbf{x})) \\
& \ll \delta_1(A ^{(n)} (\mathbf{x})) \delta_1(D^{(n)}(\mathbf{x})) = \delta_1(A ^{(n)} (\mathbf{x}))\Vert D^{(n)}(\mathbf{x})  \Vert_2 \\
& \ll \Vert A ^{(n)} (\mathbf{x})\Vert_2, 
\end{align*}
where the implied constants do not depend on $\mathbf{x}$ and~$n$. The estimate in \eqref{eq:PUA} follows from this by the equivalence of norms. 
\end{proof}

We note that the converse of Proposition~\ref{eq:PUA} is not true in general. In particular, to get the converse, assumptions on the sequence of matrices $(A^{(n)}(\mathbf{x}))_n$ are needed in order to guarantee that all the quantities $q_n^{(i)}$ ($0\le i\le d$) are roughly of the same size for each~$n$ (as is true for instance for the Jacobi--Perron algorithm, see \cite[Section~5.2]{BG:01}); see  also Proposition \ref{prop:balanced} below.
More precisely, one says that the  {\em balancedness condition} holds for the sequence  $(A^{(n)} (\mathbf{x}))_n$
if  the (vector) norms of the lines of $A^{(n)} (\mathbf{x})$ are comparable (up to multiplicative constants) with the (matrix) norm of $A^{(n)} (\mathbf{x})$,  with these constants being uniform in $n$.

\begin{proposition} \label{prop:balanced}
Assume that  the   balancedness condition  hods for  $(A^{(n)} (\mathbf{x}))_n$. Then
\[
\Vert D^{(n)} (\mathbf{x}) \Vert  \Vert  A^{(n)} (\mathbf{x})  \Vert  \ll   \Vert \wedge ^{2}  A^{(n)} (\mathbf{x})  \Vert. 
\]
\end{proposition}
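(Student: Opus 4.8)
The plan is to follow the pattern of the proof of Proposition~\ref{prop:PU}: work in the spectral norm, where $\Vert A^{(n)}(\mathbf{x})\Vert_2=\delta_1$ and $\Vert\wedge^2 A^{(n)}(\mathbf{x})\Vert_2=\delta_1\delta_2$ with $\delta_1\ge\delta_2\ge\cdots$ the singular values of $A^{(n)}(\mathbf{x})$, and then pass back to an arbitrary norm by equivalence. In these terms the claimed inequality is equivalent to $\Vert D^{(n)}(\mathbf{x})\Vert\ll\delta_2(A^{(n)}(\mathbf{x}))$, that is, to the converse of the estimate proved in Lemma~\ref{lem:singular}. So the whole problem reduces to bounding $\delta_2(A^{(n)}(\mathbf{x}))$ \emph{from below} by $\Vert D^{(n)}(\mathbf{x})\Vert$, uniformly in $\mathbf{x}$ and $n$; this is exactly the point at which the balancedness hypothesis has to be used, since without it the dominant singular direction of $A^{(n)}(\mathbf{x})$ could fall inside $\iota(\mathbf{x})^\perp$ and make $\delta_2$ much smaller than $\Vert D^{(n)}(\mathbf{x})\Vert$.

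To produce the lower bound I would evaluate $\wedge^2A^{(n)}(\mathbf{x})$ on a single well-chosen decomposable bivector. Take the orthonormal pair $\mathbf{u}=\iota(\mathbf{x})^t/\Vert\iota(\mathbf{x})\Vert$ and a unit vector $\mathbf{w}\in\iota(\mathbf{x})^\perp$ realizing $\max\{\Vert A^{(n)}(\mathbf{x})\mathbf{v}\Vert:\Vert\mathbf{v}\Vert=1,\ \mathbf{v}\in\iota(\mathbf{x})^\perp\}$, which by the argument of Lemma~\ref{lem:singular} is $\asymp\Vert D^{(n)}(\mathbf{x})\Vert$. Then $\Vert\wedge^2A^{(n)}(\mathbf{x})\Vert_2\ge\Vert A^{(n)}(\mathbf{x})\mathbf{u}\wedge A^{(n)}(\mathbf{x})\mathbf{w}\Vert=\Vert A^{(n)}(\mathbf{x})\mathbf{u}\Vert\,\Vert A^{(n)}(\mathbf{x})\mathbf{w}\Vert\sin\theta$, where $\theta$ is the angle between the two image vectors, and it suffices to check that $\Vert A^{(n)}(\mathbf{x})\mathbf{u}\Vert\asymp\Vert A^{(n)}(\mathbf{x})\Vert$ and that $\sin\theta$ is bounded below uniformly. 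This would give $\Vert\wedge^2A^{(n)}(\mathbf{x})\Vert_2\gg\Vert A^{(n)}(\mathbf{x})\Vert\,\Vert D^{(n)}(\mathbf{x})\Vert$, hence $\delta_2\gg\Vert D^{(n)}(\mathbf{x})\Vert$, as required.

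The angle bound is where the geometry of the algorithm, together with balancedness, comes in. By the identity $A^{(n)}(\mathbf{x})\,\iota(\mathbf{x})^\perp=\iota(T^n\mathbf{x})^\perp$ the vector $A^{(n)}(\mathbf{x})\mathbf{w}$ lies in the hyperplane $\iota(T^n\mathbf{x})^\perp$, so $\Vert A^{(n)}(\mathbf{x})\mathbf{u}\Vert\sin\theta$ is at least the length of the component of $A^{(n)}(\mathbf{x})\mathbf{u}$ transverse to that hyperplane. Reading the same identity on the left yields $\iota(T^n\mathbf{x})\,A^{(n)}(\mathbf{x})=c_n\,\iota(\mathbf{x})$ with $c_n=q_0^{(n)}+\sum_{i\ge1}(T^n\mathbf{x})_i\,q_i^{(n)}$, so this transverse component equals $c_n\Vert\iota(\mathbf{x})\Vert/\Vert\iota(T^n\mathbf{x})\Vert$. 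Since $\Vert\iota(\mathbf{x})\Vert$ and $\Vert\iota(T^n\mathbf{x})\Vert$ are $\asymp1$, everything comes down to $c_n\asymp\Vert A^{(n)}(\mathbf{x})\Vert$ and $\Vert A^{(n)}(\mathbf{x})\mathbf{u}\Vert\asymp\Vert A^{(n)}(\mathbf{x})\Vert$, both of which follow from $q_i^{(n)}\asymp\Vert A^{(n)}(\mathbf{x})\Vert$ for all~$i$. This last comparability is precisely what balancedness provides, once one observes that for the ordered nonnegative matrices considered here $q_i^{(n)}$ is comparable to the norm of the $i$-th row, the constraint $T^n\mathbf{x}\in\Delta$ keeping all entries of that row dominated by $q_i^{(n)}$.

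The step I expect to be the main obstacle is exactly this uniform lower bound on $\sin\theta$, i.e.\ the uniform transversality of the dominant direction $\mathbf{q}^{(n)}=(q_0^{(n)},\dots,q_d^{(n)})$ to the moving hyperplane $\iota(T^n\mathbf{x})^\perp$. The delicate point is to pass from balancedness as stated --- comparability of the \emph{row norms} of $A^{(n)}(\mathbf{x})$ with $\Vert A^{(n)}(\mathbf{x})\Vert$ --- to comparability of the individual denominators $q_i^{(n)}$, and then to keep the inner product $\iota(T^n\mathbf{x})\cdot\mathbf{q}^{(n)}=c_n$ bounded away from zero relative to $\Vert\mathbf{q}^{(n)}\Vert$ uniformly in $n$, which is what positivity of the entries and the ordering on $\Delta$ are used for.
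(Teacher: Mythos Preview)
Your approach is correct but genuinely different from the paper's. The paper proceeds by a direct entrywise computation: writing $\iota(\mathbf{x})=\kappa\bigl(\iota(T^n\mathbf{x})A^{(n)}(\mathbf{x})\bigr)$ yields
\[
p_{j,i}^{(n)}-q_j^{(n)}x_i \;=\; \frac{1}{c_n}\sum_{k=0}^{d}(T^n\mathbf{x})_k\bigl(p_{j,i}^{(n)}q_k^{(n)}-q_j^{(n)}p_{k,i}^{(n)}\bigr),
\qquad c_n=\sum_{k=0}^{d}(T^n\mathbf{x})_k\,q_k^{(n)},
\]
so every entry of $D^{(n)}(\mathbf{x})$ is a bounded combination of $2\times2$ minors of $A^{(n)}(\mathbf{x})$ divided by~$c_n$, and balancedness gives $c_n\gg\Vert A^{(n)}(\mathbf{x})\Vert$. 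This is a two-line calculation once the identity is written down. Your route via the bivector $\mathbf{u}\wedge\mathbf{w}$ is the geometric counterpart: it mirrors the proof of Proposition~\ref{prop:PU} and makes transparent that the statement is exactly the reverse inequality to Lemma~\ref{lem:singular}. Both arguments ultimately hinge on the same quantity~$c_n$.

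Your anticipated obstacle is largely self-inflicted. You do not need $\sin\theta$ bounded below, nor do you need $\Vert A^{(n)}(\mathbf{x})\mathbf{u}\Vert\asymp\Vert A^{(n)}(\mathbf{x})\Vert$ separately: your own computation already gives the product bound $\Vert A^{(n)}(\mathbf{x})\mathbf{u}\Vert\sin\theta \ge c_n\Vert\iota(\mathbf{x})\Vert/\Vert\iota(T^n\mathbf{x})\Vert\asymp c_n$, and combining this with $\Vert A^{(n)}(\mathbf{x})\mathbf{w}\Vert\asymp\Vert D^{(n)}(\mathbf{x})\Vert$ is all that is required. Likewise you do not need $q_i^{(n)}\asymp\Vert A^{(n)}(\mathbf{x})\Vert$ for every~$i$: since the matrices are nonnegative and $(T^n\mathbf{x})_0=1$, one has $c_n\Vert\iota(\mathbf{x})\Vert_1=\Vert\iota(T^n\mathbf{x})A^{(n)}(\mathbf{x})\Vert_1\ge\Vert\text{row }0\text{ of }A^{(n)}(\mathbf{x})\Vert_1$, so $c_n\gg\Vert A^{(n)}(\mathbf{x})\Vert$ follows from balancedness of a single row. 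Dropping the detour through individual~$q_i^{(n)}$ and the separate angle bound makes your proof as short as the paper's.
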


\begin{proof}
By definition, $\iota ( \mathbf{x})$ is equal to $\iota (T^n \mathbf{x}) A^{(n)}(\mathbf{x})$ divided by its first coordinate.
In other words, for  $1 \leq i \leq d$,    $$ {x}_i= \frac{   p _{0,i} ^{(n)} + p_{1,i} ^{(n)} x_1^{(n)}
 + \cdots+  p_{d,i}^{(n)}  x_d^{(n)}}{  q _{0}^{(n)} + q_{1} ^{(n)} x_1^{(n)}
 + \cdots+  q_{d}^{(n)}  x_d^{(n)}}.$$
Hence, for all  $1 \leq i,j \leq d$, one has
\[
\left \vert x_i -  \frac{ p_{j,i}^ {(n)} }{q_{j}^ {(n)} } \right \vert = \frac{ p_{0,i}^{(n)}   q_{j}^{(n)} -q_0^{(n)}  p_{j,i} ^{(n)}+ x_1^{(n)}  (p_{1,i} ^{(n)}q_{j}^{(n)} -
  q_1^{(n)} p _{j,i}^{(n)} ) + \cdots+  x_d^{(n)}  (p_{d,i} ^{(n)}q_{j}^{(n)} -
  q_d^{(n)} p _{j,i}^{(n)} ) } {(q _{0}^{(n)} + q_{1} ^{(n)} x_1^{(n)}
 + \cdots+  q_{d}^{(n)}  x_d^{(n)})  q_{j}^{(n)} },
\]
which  implies  together with the balancedness assumption that
\[
\Vert D^{(n)} (\mathbf{x}) \Vert \ll  \frac {d\, \Vert \wedge^{2}  A^{(n)}(\mathbf{x})  \Vert}{(d+1)\, \Vert A^{(n)} \Vert}. \qedhere
\]
\end{proof}

\begin{remark} \label{r:AD}
A condition similar to \eqref{eq:Dnbound} is used in Avila and Delecroix~\cite{AD15}, namely 
\[
\Vert A^{(n)}  \vert_{\iota(T^n\mathbf{x})^\bot} \Vert  \ll 1
\]
uniformly in $\mathbf{x}$.  
This implies \eqref{eq:Dnbound} and, hence, by Proposition~\ref{prop:PU} also \eqref{eq:PUA} . 
\end{remark}

\begin{remark} \label{r:lambdaDA}
For a multidimensional continued fraction algorithm satisfying conditions (H1) to (H5), we recover the fact that $\lambda_1 (D)=\lambda_2(A)$ from Propositions~\ref{prop:PU} and~\ref{prop:balanced}, by  using that the denominators $q_i^{(n)}$ grow at the same exponential rate, as observed in \cite{Lagarias:93}.
\end{remark}

\section{Selmer algorithm}\label{sec:Selmer} 
\subsection{Definition}\label{sec:defselmer} In its (ordered)  homogeneous form,
Selmer's algorithm is defined by subtracting the smallest element of a vector from the largest one and reordering the elements in the resulting vector; see Selmer~\cite{Selmer:61} or Schweiger~\cite[Chapter~7]{Schweiger:00}. Formally,
\[
T_S:\, \Delta \to \Delta, \quad  T_S(x_1,\ldots,x_d) = \kappa(\mathrm{ord}(1-x_d,x_1,x_2,\ldots,x_d)),
\]
where $\kappa$ is defined in \eqref{eq:iotakappa} and 
\[
\mathrm{ord}:\mathbb{R}^n\to\mathbb{R}^n
\]
orders the entries of its argument descendingly. Let
\[
A_S(\mathbf{x}) = \begin{cases}S_a & \mbox{if}\ \mathbf{x} \in \Delta_{S_a} := \{(x_1,\ldots,x_d) \in \Delta:\, 2x_d > 1\}, \\ S_b & \mbox{if}\ \mathbf{x} \in \Delta_{S_b} := \{(x_1,\ldots,x_d) \in \Delta:\, 2x_d < 1 \le x_{d-1}+x_d\},\end{cases}
\]
with 
\[
S_a = \begin{pmatrix}0&1&0&\cdots&0 \\ \vdots&\ddots&\ddots&\ddots&\vdots \\ 0&\cdots&0&1&0 \\ 1&0&\cdots&0&1 \\ 1&0&\cdots&0&0\end{pmatrix}, \qquad S_b = \begin{pmatrix}0&1&0&\cdots&0 \\ \vdots&\ddots&\ddots&\ddots&\vdots \\ 0&\cdots&0&1&0 \\ 1&0&\cdots&0&0 \\ 1&0&\cdots&0&1\end{pmatrix}.
\]
Since for almost all $\mathbf{x} \in \Delta$, we have $T_S^n \mathbf{x} \in \Delta_{S_a} \cup \Delta_{S_b}$ for all sufficiently large~$n$ (see \cite[Theorem~22]{Schweiger:00}), it suffices to consider the absorbing set $\Delta_{S_a} \cup \Delta_{S_b}$.
In all that follows, we do not care about the behaviour of $T_S$ on the boundary of $\Delta_{S_a}$ and $\Delta_{S_b}$ because we are interested only in metric results. The invariant measure of $T_S$ is
\begin{equation} \label{eq:mu}
\mathrm{d}\mu_S = c \frac{\mathrm{d}x_1}{x_1} \frac{\mathrm{d}x_2}{x_2} \cdots \frac{\mathrm{d}x_d}{x_d}
\end{equation}
on $\Delta_{S_a} \cup \Delta_{S_b}$, with normalizing constant $c$ such that $\mu_S(\Delta_{S_a} \cup \Delta_{S_b}) = 1$; see \cite[Theorem~22]{Schweiger:00}. 
As shown in \cite[Section~6]{Lagarias:93}, Selmer's algorithm satisfies the assumptions of Proposition~\ref{prop:lag} (in particular, it satisfies the assumptions (H1) to (H5)).

Observe that a  multiplicative version of Selmer's algorithm can also be considered; see e.g.\ \cite{Kops:12,Schweiger:04}. This algorithm is not an acceleration of the additive version. Moreover, it does not behave well in terms of convergence; see \cite[Section~2]{Schweiger:04}.

\subsection{Second Lyapunov exponent, $d=2$} \label{sec:second-lyap-expon}
As mentioned before, Nakaishi~\cite{Nakaishi06} gave an intricate proof of the fact that $\lambda_2(A_S) < 0$ for $d=2$; see also \cite{Schweiger:01}. 
We provide a very simple proof of this fact and, on top of this, we are able to bound $\lambda_2(A_S)$ away from~$0$. 
The following result should be compared to Labb\'e~\cite{Lab15}, who conjectures on the basis of computer experiments that $-0.07072$ is a good approximation to $\lambda_2(A_S)$ (and to Table~\ref{tab:0}, where we confirm this value by our computer estimates).

\begin{theorem} \label{thm:strongSelmer}
For $d=2$, the second Lyapunov exponent of the Selmer algorithm satisfies
\[
\lambda_2(A_S) < -0.052435991.
\]
In particular, for $d=2$ the Selmer algorithm is a.e.\ strongly convergent.
\end{theorem}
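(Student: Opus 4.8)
The goal is to prove $\lambda_2(A_S) < -0.052435991$ for the two-dimensional Selmer algorithm. By the discussion in Section~\ref{sec:second}, and in particular the formula \eqref{eq:int}, we know that
\[
\lambda_2(A_S) = \lambda_1(D) = \inf_{n\in\mathbb{N}} \frac{1}{n} \int_{\Delta} \log \| D^{(n)}(\mathbf{x})\|\, \mathrm{d}\mu_S(\mathbf{x}).
\]
Thus it suffices to exhibit a \emph{single} value of $n$ for which the right-hand integral, divided by $n$, is below the stated bound. The strategy is therefore to make the matrix cocycle $D^{(n)}(\mathbf{x})$ completely explicit for Selmer in dimension $d=2$, choose a suitable finite $n$, and estimate the integral of $\log\|D^{(n)}\|$ against the explicit invariant density $\mathrm{d}\mu_S = c\,\frac{\mathrm{d}x_1}{x_1}\frac{\mathrm{d}x_2}{x_2}$ from \eqref{eq:mu} closely enough to beat $-0.052435991$.

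\textbf{Key steps.}
First I would describe the finite Markov/partition structure of $T_S$ on the absorbing set $\Delta_{S_a}\cup\Delta_{S_b}$: each point $\mathbf{x}$ determines a symbolic coding $(w_1,\dots,w_n)\in\{a,b\}^n$ via the matrices $S_a, S_b$, and on each cylinder the map $T_S^n$ is a fixed Möbius-type (projective linear) transformation, so that $D^{(n)}(\mathbf{x})$ is piecewise given by an explicit product $\Pi\, S_{w_n}\cdots S_{w_1}\, H(\mathbf{x})$ by \eqref{eq:DnAn}. Second, because $\|D^{(n)}(\mathbf{x})\|$ is bounded on $\Delta$ (this is the Paley--Ursell phenomenon of Section~\ref{sec:PU}, which holds for Selmer at $d=2$), the integrand $\log\|D^{(n)}\|$ is bounded above, so the $\log$ is negative on a set of large measure and the integral is genuinely negative. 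Third, I would partition $\Delta$ into the finitely many cylinders of depth $n$, and on each cylinder evaluate or rigorously bound $\int \log\|D^{(n)}(\mathbf{x})\|\,\mathrm{d}\mu_S$. Since the density and the entries of $D^{(n)}$ are explicit rational/logarithmic functions of $(x_1,x_2)$, these integrals can in principle be computed symbolically (via polylogarithms, as the text notes) or bounded from above by controlled estimates on the density and on $\|D^{(n)}\|$ over each small region. Finally, summing the per-cylinder contributions and dividing by $n$ yields a numerical upper bound, and I would choose $n$ large enough (and the norm conveniently, e.g. the spectral or max-entry norm) so that this bound drops below $-0.052435991$, which by the infimum formula establishes the theorem; a.e.\ strong convergence then follows from Proposition~\ref{prop:lag}.

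\textbf{Main obstacle.}
The principal difficulty is \emph{rigor in the numerics}: the formula \eqref{eq:int} guarantees that the $n$-step average overestimates $\lambda_2$, so any honest \emph{upper} bound on the integral gives a valid bound on $\lambda_2$, but the bound improves only slowly in $n$, meaning one likely needs a moderately large $n$ with exponentially many cylinders. Controlling the integral over all of them, including the cylinders near the boundary of $\Delta$ where the density $1/(x_1x_2)$ blows up and where $\log\|D^{(n)}\|$ may approach $0$ (so the negative contribution is thin), requires care: I would isolate the singular corners, bound their measure and the size of the integrand there separately, and handle the bulk cylinders by explicit symbolic integration or interval-arithmetic estimates. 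Getting the constant sharp enough to reach $-0.052435991$ rather than merely proving negativity is precisely where the explicit low-dimensional structure of $S_a, S_b$ and the simple form of $\mu_S$ must be exploited to the fullest, and this is the step where computer-assisted evaluation with controlled error becomes essential.
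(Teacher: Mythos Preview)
Your overall strategy---use \eqref{eq:int}, decompose into depth-$n$ cylinders, and bound the integral of $\log\|D^{(n)}\|$ against the explicit density---is exactly the paper's approach. What you are missing is the one concrete observation that makes the $d=2$ case clean: for the row-sum norm $\|\cdot\|_\infty$ one checks directly on the four products $S_aS_a$, $S_aS_b$, $S_bS_a$, $S_bS_b$ that $\|D_S^{(2)}(\mathbf{x})\|_\infty = 1$ \emph{identically} on $\Delta_{S_a}\cup\Delta_{S_b}$. By submultiplicativity this gives $\|D_S^{(2n)}(\mathbf{x})\|_\infty \le 1$ for every $n$ and every $\mathbf{x}$, so $\log\|D_S^{(2n)}(\mathbf{x})\|_\infty \le 0$ everywhere. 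This buys you two things you did not see: (i) $\lambda_2(A_S)\le 0$ is immediate, and strict negativity follows by exhibiting a single cylinder where $\|D_S^{(4)}\|_\infty<1$; (ii) because the integrand is nonpositive, you only need a \emph{lower} bound on $\mu_S(\Delta_M)$, namely $\mu_S(\Delta_M)\ge \frac{12}{\pi^2}\min_{\Delta_M}\frac{1}{x_1}\min_{\Delta_M}\frac{1}{x_2}\,\mathrm{Leb}(\Delta_M)$, and the whole sum is then a one-sided rigorous upper bound on $\lambda_2$.

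In particular, the ``main obstacle'' you anticipate---the singularity of the density near the boundary and the need to isolate corner regions---simply does not arise: since the integrand is $\le 0$, underestimating the measure near the singularity only \emph{weakens} the bound, it never threatens its validity. The paper then uses convexity of $\mathbf{x}\mapsto\|D_S^{(2n)}(\mathbf{x})\|_\infty$ on each triangle $\Delta_M$ to reduce $\max_{\Delta_M}\|D_S^{(2n)}\|_\infty$ to a check at the three vertices, and runs the resulting finite sum for $2n=50$ with controlled rounding. Your proposal would work, but without spotting the $\|D_S^{(2)}\|_\infty=1$ identity you would be forced into the two-sided estimates (upper and lower bounds on $\mu_S$ depending on the sign of the integrand) that the paper only needs for $d=3$.
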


\begin{proof}
We have 
\[
S_a^2 = \begin{pmatrix}1&0&1\\1&1&0\\0&1&0\end{pmatrix},\ 
S_a S_b = \begin{pmatrix}1&0&0\\1&1&1\\0&1&0\end{pmatrix},\ 
S_b S_a = \begin{pmatrix}1&0&1\\0&1&0\\1&1&0\end{pmatrix},\ 
S_b^2 = \begin{pmatrix}1&0&0\\0&1&0\\1&1&1\end{pmatrix},
\]
and the corresponding matrices $D_S^{(2)}(x_1,x_2)$ are
\[
\begin{pmatrix}1-x_1&-x_2\\1&0\end{pmatrix},\ 
\begin{pmatrix}1-x_1&1-x_2\\1&0\end{pmatrix},\ 
\begin{pmatrix}1&0\\1-x_1&-x_2\end{pmatrix},\ 
\begin{pmatrix}1&0\\1-x_1&1-x_2\end{pmatrix}.
\]
Since $x_1+x_2>1>x_1>x_2>0$, we have thus $\|D_S^{(2)}(\mathbf{x})\|_\infty = 1$ for all $\mathbf{x} \in \Delta_{S_a} \cup \Delta_{S_b}$.
This already implies that $\lambda_1(D_S) \le 0$ by (\ref{eq:int}).

Moreover, this implies that $\|D_S^{(4)}(\mathbf{x})\|_\infty \le 1$ for all $\mathbf{x} \in \Delta_{S_a} \cup \Delta_{S_b}$.
We have
\[
(S_a S_b)^2 = \begin{pmatrix}1&0&0\\2&2&1\\1&1&1\end{pmatrix}, \quad \mbox{thus} \quad  
D_S^{(4)}(x_1,x_2) = \begin{pmatrix}2-2x_1&1-2x_2\\1-x_1&1-x_2\end{pmatrix}
\]
for $(x_1,x_2) \in \Delta_{S_b} \cap T_S^{-1} \Delta_{S_a} \cap T_S^{-2} \Delta_{S_b} \cap T_S^{-3} \Delta_{S_a}$, i.e., $(x_1,x_2)$ in the triangle with corners $(3/4,1/2)$, $(3/5,2/5)$, $(2/3,1/3)$. 
We have thus 
\[
\|D_S^{(4)}(3/4-\varepsilon,1/2-\varepsilon)\|_\infty = 3/4+2\varepsilon, 
\]
hence, $\lambda_1(D_S) \le \frac{1}{4} \int_{\Delta} \log \| D_S^{(4)}(\mathbf{x}) \|_\infty \mathrm{d}\mu_S(\mathbf{x}) < 0$.

To get better upper bounds for $\lambda_1(D_S)$, note that $A_S^{(n)}(\mathbf{x}) = M \in \{S_a,S_b\}^n$ for all $\mathbf{x}$ in the triangle
\begin{equation}\label{eq:DeltaM}
\Delta_M = \{\mathbf{x} \in \Delta:\, \iota(\mathbf{x}) \in \mathbb{R}\, \iota(\Delta_{S_a} \cup \Delta_{S_b})\, M\}.
\end{equation}
We have thus 
\begin{equation}\label{eq:lamest}
\lambda_1(D_S) \le \frac{1}{n} \sum_{M\in\{S_a,S_b\}^n} \mu_S(\Delta_M) \max_{\mathbf{x}\in \Delta_M} \log \|D_S^{(n)}(\mathbf{x})\|_\infty
\end{equation}
for all $n \ge 1$. The measure of $\Delta_M$ can be calculated using dilogarithms; here we  only need to  bound it  by
\begin{equation}\label{eq:muest}
\mu_S(\Delta_M) \ge \frac{12}{\pi^2} \min_{\mathbf{x}\in \Delta_M} \frac{1}{x_1x_2} \mathrm{Leb}(\Delta_M) \ge \frac{12}{\pi^2} \min_{\mathbf{x}\in \Delta_M} \frac{1}{x_1} \min_{\mathbf{x}\in \Delta_M} \frac{1}{x_2} \mathrm{Leb}(\Delta_M);
\end{equation}
note that $c = 12/\pi^2$ in the definition of $\mu_S$ for $d=2$. 
Since $\log \|D_S^{(2n)}(\mathbf{x})\|_\infty \le 0$ for all $\mathbf{x} \in \Delta_{S_a} \cup \Delta_{S_b}$, we obtain, by using \eqref{eq:muest} to estimate $\mu_S(\Delta_M)$ in \eqref{eq:lamest} and taking even powers of matrices,  that
\begin{equation}\label{eq:tcd2}
\lambda_1(D_S) \le \frac{6}{\pi^2 n} \sum_{M\in\{S_a,S_b\}^{2n}} \min_{\mathbf{x}\in \Delta_M} \frac{1}{x_1} \min_{\mathbf{x}\in \Delta_M} \frac{1}{x_2} \mathrm{Leb}(\Delta_M) \max_{\mathbf{x}\in \Delta_M} \log \| D_S^{(2n)}(\mathbf{x}) \|_\infty.
\end{equation}

As noted in \cite[Lemma~4.5]{HK:02}, the function $\mathbf{x} \mapsto \|D_S^{(2n)}(\mathbf{x})\|_\infty$ is convex on~$\Delta_M$, hence, the maximum $ \max_{\mathbf{x}\in \Delta_M} \log \| D_S^{(2n)}(\mathbf{x}) \|_\infty$ is attained in one of the corners of~$\Delta_M$. This makes \eqref{eq:tcd2} amenable for estimating $\lambda_1(D_S)$ with help of computer calculations. 
Indeed, taking $n=25$ in \eqref{eq:tcd2} we gain $\lambda_2(A_S) = \lambda_1(D_S) < -0.052435991$. We refer to the appendix for details on how we handle the numerical issues of this computer calculation.
\end{proof}

In view of Proposition~\ref{prop:PU} we can formulate a result that is true uniformly for \emph{all} $\mathbf{x}\in\Delta$.

\begin{proposition}\label{prop:selmerPU}
For the Selmer algorithm with $d=2$  there  exists $C>0$ such that for all  ${\bf x}$ and  all  $i,j\in\{1,\ldots,d\}$ we have $| p_{i,j}^{(n)} - q_i ^{(n)}  x_j |  \leq C$. Moreover, the inequality
\[
\Vert \wedge^2 A ^{(n)}(\mathbf{x}) \Vert \ll \Vert A ^{(n)}(\mathbf{x}) \Vert
\]
holds. Here the implied constant does not depend on $\mathbf{x}$ and $n\in\mathbb{N}$. 
\end{proposition}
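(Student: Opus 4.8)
The plan is first to separate the two assertions, which are very unequal in difficulty. The entries of the $2\times2$ matrix $D_S^{(n)}(\mathbf{x})$ are exactly the quantities $p_{i,j}^{(n)}-q_i^{(n)}x_j$ with $i,j\in\{1,2\}$, so a uniform bound $\|D_S^{(n)}(\mathbf{x})\|_\infty\le C$ is literally the first inequality, with the same $C$. Granting that bound, the second inequality is immediate: Selmer's algorithm satisfies (H1)--(H5), the uniform bound is precisely hypothesis \eqref{eq:Dnbound}, and Proposition~\ref{prop:PU} then yields $\|\wedge^2 A^{(n)}(\mathbf{x})\|\ll\|A^{(n)}(\mathbf{x})\|$ with implied constant independent of $\mathbf{x}$ and $n$. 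Hence the entire task reduces to establishing the uniform bound on $\|D_S^{(n)}\|_\infty$.

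To establish it I would reuse the computation already carried out in the proof of Theorem~\ref{thm:strongSelmer}. There one checks that for $\mathbf{x}$ in the absorbing set $\Delta_{S_a}\cup\Delta_{S_b}$ the matrix $D_S^{(2)}(\mathbf{x})$ is one of four explicit matrices, each with $\|D_S^{(2)}(\mathbf{x})\|_\infty=1$; the point is that the inequalities $x_1+x_2>1>x_1>x_2>0$ valid there force every row-sum of absolute values to be at most $1$. A short computation shows that $\Delta_{S_a}\cup\Delta_{S_b}$ is forward invariant under $T_S$ (the coordinates of $T_S\mathbf{x}$ sum to $1/x_1>1$), so along any orbit starting in the absorbing set every consecutive pair of steps stays inside it and $\|D_S^{(2)}(T_S^{2j}\mathbf{x})\|_\infty=1$ for all $j$. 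The cocycle identity $D_S^{(m+n)}(\mathbf{x})=D_S^{(m)}(T_S^n\mathbf{x})\,D_S^{(n)}(\mathbf{x})$ together with submultiplicativity of $\|\cdot\|_\infty$ then gives $\|D_S^{(2k)}(\mathbf{x})\|_\infty\le1$ for all $k$.

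Odd exponents are handled by peeling off one factor, $D_S^{(2k+1)}(\mathbf{x})=D_S^{(1)}(T_S^{2k}\mathbf{x})\,D_S^{(2k)}(\mathbf{x})$, and bounding $\|D_S^{(1)}\|_\infty$ by an absolute constant: along orbits in the absorbing set $A_S$ takes only the values $S_a,S_b$, and the corresponding single-step matrices have entries that are affine in $(x_1,x_2)$ on the bounded simplex, hence uniformly bounded (by $2$, say). Taking $C=\max\{1,\sup\|D_S^{(1)}\|_\infty\}$ yields $\|D_S^{(n)}(\mathbf{x})\|_\infty\le C$ for all $n$ whenever $\mathbf{x}$ lies in the absorbing set, which is the first inequality.

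The one delicate point — and the step I expect to require the most care — is the precise scope of ``for all $\mathbf{x}$''. On the complement $\{x_1+x_2<1\}$ the transition is governed by a third matrix $S_c=\left(\begin{smallmatrix}1&0&0\\0&1&0\\1&0&1\end{smallmatrix}\right)$, and a direct computation gives $D_S^{(k)}(\mathbf{x})=\left(\begin{smallmatrix}1&0\\-kx_1&1-kx_2\end{smallmatrix}\right)$ after $k$ such transient steps, whose $\infty$-norm grows linearly in $k$. Since the number of transient steps is unbounded as $\mathbf{x}$ approaches the face $\{x_2=0\}$, no single constant $C$ can work on all of $\Delta$. I would therefore phrase and prove the result on the forward-invariant, full-measure absorbing set $\Delta_{S_a}\cup\Delta_{S_b}$, in accordance with the paper's standing convention that the transient and boundary behaviour of $T_S$ is disregarded; with that reading the constant is manifestly independent of $\mathbf{x}$ and $n$, and the argument above is complete.
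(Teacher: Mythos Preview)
Your proposal is correct and follows essentially the same route as the paper: use the computation from Theorem~\ref{thm:strongSelmer} that $\|D_S^{(2)}(\mathbf{x})\|_\infty=1$ on the absorbing set, deduce $\|D_S^{(n)}(\mathbf{x})\|\ll 1$ by submultiplicativity of the cocycle, and then invoke Proposition~\ref{prop:PU}. Your treatment is in fact more careful than the paper's two-line proof---you spell out the forward invariance of $\Delta_{S_a}\cup\Delta_{S_b}$, the odd/even split, and the scope caveat about the transient region $\{x_1+x_2<1\}$, all of which the paper silently absorbs into its standing convention of working on the absorbing set.
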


\begin{proof}
In the proof of Theorem~\ref{thm:strongSelmer} we showed that $\Vert D^{(2n)}(\mathbf{x})\Vert \le 1$. By submultiplicativity this implies that $\Vert D^{(n)}(\mathbf{x})\Vert \ll 1$ holds uniformly for all $\mathbf{x}\in\Delta$ and all $n\in\mathbb{N}$. The result thus follows from Proposition~\ref{prop:PU}.
\end{proof}

Avila and Delecroix~\cite{AD15} proved that primitive Brun matrices for $d=2$ and primitive Arnoux--Rauzy matrices with $d \ge 2$ are Pisot, i.e., all eigenvalues except the Perron--Frobenius eigenvalue have absolute value less than~$1$.
We prove the analogous result for Selmer with $d=2$. 

\begin{theorem}
Let $d=2$ and $M \in \{S_a,S_b\}^n$ for some $n \ge 1$.
The following are equivalent.
\begin{enumerate}
\item
$M$ is a primitive matrix.
\item
$M$ is a Pisot matrix.
\item
$M^2 \not\in \{S_a S_b, S_b^2\}^n \cup \{S_b S_a, S_b^2\}^n$.
\end{enumerate}
\end{theorem}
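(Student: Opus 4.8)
The plan is to route the three conditions through the notion of \emph{reducibility} (block upper triangularity after a simultaneous row/column permutation) and to let the Paley--Ursell bound of Proposition~\ref{prop:selmerPU} do the spectral work. I begin with one soft observation: since $\det M=\pm1$, any block-triangular form of a reducible $M$ splits $3=1+2$, and the two diagonal blocks have integer determinants multiplying to $\pm1$, so each equals $\pm1$; the $1\times1$ block, being a single nonnegative integer of determinant $\pm1$, equals~$1$. Hence \emph{every reducible $M$ has $1$ as an eigenvalue}, and in particular is not Pisot. This already yields $(2)\Rightarrow(1)$: a Pisot matrix has $1$ neither as its dominant eigenvalue (which exceeds~$1$) nor as a subdominant one (which lies inside the unit disk), so it is irreducible, and an irreducible nonnegative matrix whose unique eigenvalue of maximal modulus is its Perron root is primitive.

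Next I would use Proposition~\ref{prop:selmerPU} to control the spectrum of an irreducible $M\in\{S_a,S_b\}^n$. Let $\mathbf{x}^\ast\in\Delta$ be the point whose Selmer itinerary is the periodic repetition of the word defining $M$; irreducibility makes the Perron eigenvector strictly positive, so $\mathbf{x}^\ast$ lies in the interior of the cylinder $\Delta_M$ from \eqref{eq:DeltaM} and satisfies $A^{(nk)}(\mathbf{x}^\ast)=M^k$ for every~$k$. Writing the Perron root as $\beta$ and the remaining eigenvalues as $\mu_2,\mu_3$ with $|\mu_2|\ge|\mu_3|$, the spectral radius of $M$ is $\beta$ and that of $\wedge^2M$ is $\beta\,|\mu_2|$. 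Since $\wedge^2M^k=(\wedge^2M)^k$, Proposition~\ref{prop:selmerPU} gives $\Vert(\wedge^2M)^k\Vert\ll\Vert M^k\Vert$ uniformly in $k$, and taking $k$-th roots and letting $k\to\infty$ (Gelfand's formula) yields $\beta\,|\mu_2|\le\beta$, i.e.\ $|\mu_2|\le1$. Two consequences follow at once: an irreducible $M$ cannot be imprimitive, since an imprimitive irreducible matrix has a second eigenvalue of modulus $\beta$, and here $\beta>1$ because $M$ is not a permutation matrix; so \emph{irreducible is the same as primitive} for Selmer products, and the two subdominant eigenvalues of a primitive $M$ have modulus at most~$1$.

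To finish $(1)\Rightarrow(2)$ it remains to push the bound $|\mu_2|\le1$ to a strict inequality, i.e.\ to exclude a subdominant eigenvalue on the unit circle. Because $|\mu_2\mu_3|=1/\beta<1$, a unit-modulus subdominant eigenvalue cannot belong to a complex-conjugate pair (its conjugate would share its modulus), so it would be real and equal to $\pm1$. Thus the task reduces to the purely algebraic statement that a primitive Selmer product has neither $1$ nor $-1$ as an eigenvalue; equivalently, that $\det(M-I)$ and $\det(M+I)$ vanish only in the reducible case. I would prove this together with the combinatorial description of reducibility in the next step, obtaining $|\mu_2|<1$ and hence the Pisot property.

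The combinatorial heart is the identification $\neg(3)\Leftrightarrow M\ \text{reducible}\Leftrightarrow \{\pm1\}\cap\mathrm{spec}(M)\neq\emptyset$, which simultaneously gives $(1)\Leftrightarrow(3)$ and supplies the exclusion needed above. Primitivity depends only on the $0/1$ support of the matrices, and the supports of $S_a$ and $S_b$ define relations on $\{1,2,3\}$ sending $1\mapsto\{2\}$ and differing only in that $S_a$ branches at $2$ while $S_b$ branches at $3$ and leaves $\{1,2\}$ invariant. Tracking a proper invariant subset through the composition of these relations along the word shows that $M$ possesses an invariant coordinate subspace exactly when the occurrences of the letter $a$ are confined to a single parity class of positions in the doubled word; translating this back to matrices is precisely the statement that $M^2$ factors into length-two blocks from $\{S_aS_b,S_b^2\}$ (all even positions equal to $b$) or from $\{S_bS_a,S_b^2\}$ (all odd positions equal to $b$), which is the negation of~$(3)$. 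The main obstacle is exactly this step: pinning down which invariant subsets survive the composition of the two relations, and matching their existence both with the block families in~$(3)$ and with the vanishing of $\det(M\mp I)$. Once this is done, $(1)\Leftrightarrow(3)$ follows from ``reducible $=$ not primitive'' established in the second paragraph, the strict bound $|\mu_2|<1$ gives $(1)\Rightarrow(2)$, and $(2)\Rightarrow(1)$ was shown at the outset; the spectral arguments are comparatively soft once Proposition~\ref{prop:selmerPU} is in hand.
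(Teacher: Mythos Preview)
Your route to $|\mu_2|\le 1$ via $\wedge^2$ and Gelfand's formula is correct and is essentially a repackaging of the paper's own ingredient (both rest on Proposition~\ref{prop:selmerPU}, which in turn comes from $\|D_S^{(2n)}\|_\infty\le 1$). The genuine gap is the passage to \emph{strict} inequality. You reduce correctly to excluding $\pm 1$ from the spectrum of a primitive~$M$, but then defer this to ``the combinatorial step'', and that step as you outline it only detects reducibility (existence of an invariant coordinate subspace), not the vanishing of $\det(M\mp I)$. In particular, nothing in your invariant-subset analysis rules out $-1$ as an eigenvalue of a primitive Selmer product; this is a separate algebraic fact, and it is not clear how to extract it from the block description in~(3). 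As written, the equivalence $\text{reducible}\Leftrightarrow\{\pm1\}\cap\mathrm{spec}(M)\neq\emptyset$ is asserted but only one direction (reducible $\Rightarrow 1\in\mathrm{spec}(M)$) is actually proved.

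The paper sidesteps this difficulty with a short perturbation argument. It uses the bound $\|D_S^{(2n)}(\mathbf{x})\|_\infty\le 1$ directly at the fixed point $(v_1,v_2)=\kappa(\bv)$ of a positive power of~$M$: since $D_S^{(2n)}(v_1,v_2)$ is (up to a change of basis) the restriction of $M^2$ to $\bv^\perp$, a unit-modulus subdominant eigenvalue forces $\|D_S^{(2n)}(v_1,v_2)\|_\infty=1$. But $(v_1,v_2)$ lies in the \emph{interior} of $\Delta_{M^2}$ (a positive matrix maps the closed cone into its interior), and the entries of $D_S^{(2n)}(x_1,x_2)$ are affine in $(x_1,x_2)$ with nonzero linear coefficients $m_{i,0}>0$; hence a small perturbation of $(v_1,v_2)$ inside $\Delta_{M^2}$ pushes the norm above~$1$, contradicting the uniform bound. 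This gives strictness in one stroke, without ever having to decide whether the hypothetical unit-modulus eigenvalue is $+1$, $-1$, or complex. For $(1)\Leftrightarrow(3)$ the paper also proceeds more concretely than your sketch: it checks that products in $\{S_aS_b,S_b^2\}^n$ fix the first row $(1,0,0)$ and products in $\{S_bS_a,S_b^2\}^n$ fix the second row $(0,1,0)$, while any $M^2$ outside both families must contain a factor $S_a S_b^{2k} S_a$, and then exhibits an explicit positive minorant of~$M^5$.
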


\begin{proof}
Let first $M \in \{S_a,S_b\}^n$ be a primitive matrix. By taking a suitable power of $M$ if necessary, we may assume w.l.o.g.\ that $M$ is a positive matrix. Let $\bv\in\Lambda$ be the left eigenvalue of $M$ corresponding to the Perron--Frobenius eigenvalue and set $(v_1,v_2)=\kappa(\bv)$. Then from \eqref{eq:DnAn} (cf.~\cite[Section~3]{HK:02}) we easily derive that, up to a change of basis, $D^{(2n)}(v_1,v_2)$ is the restriction of~$M^2$ to~$\bv^\bot$. 
We gain from the proof of Theorem~\ref{thm:strongSelmer} that 
\begin{equation}\label{eq:Dbd}
\Vert D^{(2n)}(x_1,x_2)\Vert_\infty \le 1 \quad \hbox{ for each } (x_1,x_2) \in \Delta_{M^{2}}. 
\end{equation}
Since $\Vert\cdot\Vert_\infty$ is a consistent matrix norm this implies that each eigenvalue of $M$, except its Perron--Frobenius eigenvalue, has modulus less than or equal to $1$. 

Suppose that $M$ and, hence, $M^{2}=(m_{i,j})_{0\le i,j\le 2}$ has an eigenvalue of modulus $1$. Then, by the compatibility of the norm, we have $\Vert D^{(2n)}(v_1,v_2)\Vert_\infty = 1$. 
By \eqref{eq:Dn} we have
\begin{equation}\label{eq:D2d}
D^{(2n)}(x_1,x_2) = \begin{pmatrix}m_{1,1} - m_{1,0} x_1  & m_{1,2} - m_{1,0} x_2 \\  m_{2,1} - m_{2,0} x_1& m_{2,2} - m_{2,0} x_2\end{pmatrix} \quad \hbox{for each }(x_1,x_2)\in  \Delta_{M^{2}}.
\end{equation}
Since $M$ is positive, by the definition of $\Delta_M$ in \eqref{eq:DeltaM} the point $(v_1,v_2)$ is contained in the interior of $\Delta_{M^{2}}$; indeed, a positive matrix maps each (closed) positive cone into its interior. Let $U\subset\Delta_{M^{2}}$ be a neighborhood of $(v_1,v_2)$. Since $\Vert D^{(2n)}(v_1,v_2)\Vert_\infty = 1$, we see from \eqref{eq:D2d} and the definition of $\Vert \cdot \Vert_\infty$ (noting that the entries $m_{i,0}$ of $M^2$ are nonzero for $1\le i\le 2$) that there is $(x_1,x_2)\in U$ with $\Vert D^{(2n)}(x_1,x_2)\Vert_\infty > 1$, a contradiction to \eqref{eq:Dbd}. 
Thus, save for the Perron--Frobenius eigenvalue, each eigenvalue of $M$ has modulus less than $1$. Since $M$ is regular, this entails that the characteristic polynomial of $M$ is the minimal polynomial of a Pisot number, hence, $M$ is a Pisot matrix.

Conversely, it is well known that Pisot matrices are primitive (see e.g.\ \cite[Theorem~1.2.9]{Fog02}), i.e., we have (1) $\Leftrightarrow$ (2). 

If $M^2 \in \{S_a S_b, S_b^2\}^n$, then the first line of $M^{2k}$ equals $(1,0,0)$ for all $k \ge 1$, hence, $M$ is not primitive (and $1$ is an eigenvalue of~$M$).
Similarly, for each product of the matrices $S_b S_a$ and $S_b^2$, the second line equals $(0,1,0)$, hence $M$ is not primitive if $M^2 \in \{S_b S_a, S_b^2\}^n$.
Finally, when $M^2 \not\in \{S_a S_b, S_b^2\}^n \cup \{S_b S_a, S_b^2\}^n$, then $M^2$ contains a product of  the form $S_a S_b^{2k} S_a$ for some $k \ge 0$.
Note that the diagonals of $S_b^2$,  $(S_a S_b)^2$ and $(S_b S_a)^2$ are positive, hence multiplying a nonnegative matrix by one of these matrices does not decrease any of its elements.
Therefore, we find that $M^5$ contains a factor that is at least as large as $S_a^5$, $S_a^4 S_b$, $S_a^3 S_b S_a$ or $(S_a^2 S_b)^2$, which are all positive matrices. 
This shows that $M$ is primitive, thus (1) $\Leftrightarrow$ (3). 
\end{proof}

\subsection{Second Lyapunov exponent, $d = 3$}
In this  case the situation is more intricate than for $d=2$. 
Firstly, $S_a$ has now a pair of complex eigenvalues outside the unit circle, hence, $\Vert D_S^{(n)}(\mathbf{x})\Vert \le 1$ cannot hold for all $\mathbf{x} \in \Delta_{S_a} \cup \Delta_{S_b}$.    
Secondly, the conjectured value of $\lambda_2(A_S)$ is approximately $-0.02283$ (see Table~\ref{tab:0}) and therefore much closer to zero than in the case $d=2$. Nevertheless, we are able to establish the following convergence result.

\begin{theorem} \label{thm:strongSelmer3}
For $d=3$, the second Lyapunov exponent of the Selmer algorithm satisfies
\[
\lambda_2(A_S) < -0.000436459.
\]
In particular, for $d=3$ the Selmer algorithm is a.e.\ strongly convergent.
\end{theorem}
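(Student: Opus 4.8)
The plan is to run the same machinery as in the proof of Theorem~\ref{thm:strongSelmer}, but carried out in dimension three, where the elementary bound $\|D_S^{(2)}(\mathbf{x})\|_\infty = 1$ is no longer available. As before, I would start from $\lambda_2(A_S) = \lambda_1(D_S)$ together with the variational formula \eqref{eq:int}, so that it suffices to produce, for a single well-chosen $n$, an upper bound for $\frac{1}{n}\int_\Delta \log\|D_S^{(n)}(\mathbf{x})\|_\infty\,\mathrm{d}\mu_S(\mathbf{x})$ lying below $-0.000436459$. I would partition $\Delta$ into the simplices $\Delta_M$ of \eqref{eq:DeltaM} indexed by the words $M \in \{S_a,S_b\}^n$ (now tetrahedra, since $d=3$), on each of which $A_S^{(n)}(\mathbf{x}) = M$ and hence $D_S^{(n)}(\mathbf{x})$ is the explicit affine matrix-valued map \eqref{eq:Dn}. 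The convexity of $\mathbf{x}\mapsto\|D_S^{(n)}(\mathbf{x})\|_\infty$ on $\Delta_M$ (\cite[Lemma~4.5]{HK:02} applies verbatim in any dimension) still lets me replace $\max_{\mathbf{x}\in\Delta_M}\log\|D_S^{(n)}(\mathbf{x})\|_\infty$ by the maximum over the four vertices of $\Delta_M$, which are rational and computable from~$M$.

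The essential new difficulty is that, as noted just before the statement, $S_a$ now has a pair of complex eigenvalues of modulus exceeding one, so on the tetrahedra $\Delta_M$ whose words are rich in $S_a$ the quantity $\max_{\Delta_M}\log\|D_S^{(n)}\|_\infty$ is strictly positive. Consequently the summands in \eqref{eq:lamest} no longer have a fixed sign, and the one-sided estimate \eqref{eq:muest}, which exploited $\log\|D_S^{(2n)}\|_\infty \le 0$, is insufficient. I would therefore bound $\mu_S(\Delta_M) = c\int_{\Delta_M}\frac{\mathrm{d}x_1\,\mathrm{d}x_2\,\mathrm{d}x_3}{x_1 x_2 x_3}$ from both sides, using
\[
c\,\min_{\mathbf{x}\in\Delta_M}\tfrac{1}{x_1 x_2 x_3}\,\mathrm{Leb}(\Delta_M) \le \mu_S(\Delta_M) \le c\,\max_{\mathbf{x}\in\Delta_M}\tfrac{1}{x_1 x_2 x_3}\,\mathrm{Leb}(\Delta_M),
\]
feeding the lower bound into the negative summands and the upper bound into the positive ones; here $\mathrm{Leb}(\Delta_M)$ is computed exactly from the vertices and $c$ is the explicit normalising constant of \eqref{eq:mu} for $d=3$. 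This yields a rigorous upper bound for $\lambda_1(D_S)$ as a finite, computable sum over $\{S_a,S_b\}^n$.

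The main obstacle I expect is computational but genuinely delicate. Because the conjectured value of $\lambda_2(A_S)$ is only about $-0.02283$ and the two-sided measure estimate is lossy, the available margin is tiny, so $n$ must be taken fairly large and the positive contributions coming from $S_a$-heavy words must be damped — most naturally by an adaptive refinement that subdivides (i.e.\ extends) only those words whose contribution is not yet safely controlled, rather than using a single uniform length~$n$. Throughout, every inequality must survive rigorous error handling: the vertex coordinates, the Lebesgue measures, the extrema of $\frac{1}{x_1 x_2 x_3}$, and the norms $\|D_S^{(n)}\|_\infty$ all have to be evaluated either exactly over $\mathbb{Q}$ or with certified interval arithmetic, so that the final numerical inequality is a genuine proof and not a heuristic. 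The bookkeeping needed to guarantee this certified bound is deferred to the appendix.
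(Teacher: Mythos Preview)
Your strategy coincides with the paper's: fix a large~$n$, split $\{S_a,S_b\}^n$ according to the sign of $\max_{\Delta_M}\log\|D_S^{(n)}\|_\infty$, and sandwich $\mu_S(\Delta_M)$ from above on the positive part and from below on the negative part, all with certified arithmetic. The paper does exactly this with the fixed length $n=52$ (no adaptive refinement).

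There is one concrete point you have overlooked, and it is not cosmetic. For $d=3$ the density $c/(x_1x_2x_3)$ is \emph{unbounded} on the absorbing set, and in fact the vertex $(1,1,0)$ lies in $\Delta_{S_b^n}$ for every~$n$ (since $(1,1,1,0)$ is fixed by~$S_b$). Hence your upper bound $c\,\max_{\Delta_M}\frac{1}{x_1x_2x_3}\,\mathrm{Leb}(\Delta_M)$ equals $+\infty$ for $M=S_b^n$, and this word contributes with a \emph{positive} sign (indeed $\max_{\Delta_{S_b^{52}}}\log\|D_S^{(52)}\|_\infty=2$), so your scheme as written returns~$+\infty$. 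Adaptive refinement does not help: extending $S_b^n$ reproduces $S_b^{n+1}$, which carries the same singular vertex. The paper handles this by treating the single simplex $\Delta_{S_b^{52}}$ separately, computing $\mu_S(\Delta_{S_b^{52}})$ directly via polylogarithms rather than through the crude density bound. Once you add this one-off computation, your argument goes through.
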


\begin{proof}
In the same way as in the proof of Theorem~\ref{thm:strongSelmer} we derive the estimate
\[
\lambda_2(A_S)=\lambda_1(D_S) \le \frac{1}{52} \sum_{M\in\{S_a,S_b\}^{52}} \mu_S(\Delta_M) \max_{\mathbf{x}\in \Delta_M} \log \|D_S^{(52)}(\mathbf{x})\|_\infty.
\]
However, since $\max_{\mathbf{x}\in \Delta_M} \log \|D_S^{(52)}(\mathbf{x})\|_\infty$ can be positive as well as negative we have to split this sum accordingly. In particular, we write
\[
\begin{split}
\lambda_1(D_S) \le \frac{1}{52} & \bigg( 
 \mu_S(\Delta_{S_b^{52}}) \max_{\mathbf{x}\in\Delta_{S_b^{52}}} \log \|D_S^{(52)}(\mathbf{x})\|_\infty \\
& + {\sum}^+ \mu_S(\Delta_M) \max_{\mathbf{x}\in \Delta_M} \log \|D_S^{(52)}(\mathbf{x})\|_\infty  +
{\sum}^- \mu_S(\Delta_M) \max_{\mathbf{x}\in \Delta_M} \log \|D_S^{(52)}(\mathbf{x})\|_\infty
\bigg).
\end{split}
\]
Here $\sum^+$ ranges over all $M\in\{S_a,S_b\}^{52}\setminus\{S_b^{52}\}$ satisfying $\max_{\mathbf{x}\in \Delta_M} \log \|D_S^{(52)}(\mathbf{x})\|_\infty \ge~0$ and $\sum^-$ ranges over all $M\in\{S_a,S_b\}^{52}\setminus\{S_b^{52}\}$ satisfying $\max_{\mathbf{x}\in \Delta_M} \log \|D_S^{(52)}(\mathbf{x})\|_\infty <~0$. The summand corresponding to $M=S_b^{52}$ has to be treated separately because the density of $\mu_S$ is not bounded in $\Delta_{S_b^{52}}$. We now use the estimates 
\[
c \min_{\mathbf{x}\in \Delta_M} \frac{1}{x_1} \min_{\mathbf{x}\in \Delta_M} \frac{1}{x_2} \mathrm{Leb}(\Delta_M)
\le
\mu_S(\Delta_M)
\le
c \max_{\mathbf{x}\in \Delta_M} \frac{1}{x_1} \max_{\mathbf{x}\in \Delta_M} \frac{1}{x_2} \mathrm{Leb}(\Delta_M),
\]
where, in view of \eqref{eq:mu}, we have
\[
c = \bigg(\int_{\Delta_{S_a} \cup \Delta_{S_b}}  \frac{\mathrm{d}x_1\mathrm{d}x_2\mathrm{d}x_3}{x_1x_2x_3} \bigg)^{-1} = \frac{8}{\zeta(3)},
\]
with $\zeta(s)$ being the Riemann zeta function. We therefore arrive at
\begin{equation}\label{eq:toCompute}
\begin{split}
\lambda_1(D_S) \le \frac{1}{52}\bigg( &
 \mu_S(\Delta_{S_b^{52}}) \max_{\mathbf{x}\in \Delta_{S_b^{52}}} \log \|D_S^{(52)}(\mathbf{x})\|_\infty 
 \\
 &+
\frac{8}{\zeta(3)}{\sum}^+ \max_{\mathbf{x}\in \Delta_M} \frac{1}{x_1} \max_{\mathbf{x}\in \Delta_M} \frac{1}{x_2} \mathrm{Leb}(\Delta_M) \max_{\mathbf{x}\in \Delta_M} \log \|D_S^{(52)}(\mathbf{x})\|_\infty  
\\
& +
\frac{8}{\zeta(3)}{\sum}^- \min_{\mathbf{x}\in \Delta_M} \frac{1}{x_1} \min_{\mathbf{x}\in \Delta_M} \frac{1}{x_2} \mathrm{Leb}(\Delta_M) \max_{\mathbf{x}\in \Delta_M} \log \|D_S^{(52)}(\mathbf{x})\|_\infty
\bigg).
\end{split}
\end{equation}
The right hand side of \eqref{eq:toCompute} can be bounded from above by $-0.000436459$ using extensive computer calculations. This yields the result. Details on the computer calculations are given in the appendix; we note already here that $-0.000436459$ is really an upper bound for $\lambda_2(A_S)$ because our programs are provided with an appropriate handling of the occurring floating point errors.  
\end{proof}

Note that $\Vert D_S^{(n)}(\mathbf{x})\Vert$ is not bounded by $1$. Also, there is no reason  for  \eqref{eq:Dnbound} and  a Paley--Ursell inequality to  hold, although the algorithm $A_S$ satisfies $\lambda_2 (A_S)<0$.

\subsection{Second Lyapunov exponent, $d\ge 4$}

Recall that, for arbitrary dimension $d$, the cocyle $D_S^{(n)}(\mathbf{x})$ is given by
\[
D_S^{(1)}(\mathbf{x}) = \begin{pmatrix}0&1&0&\cdots&0 \\ \vdots&\ddots&\ddots&\ddots&\vdots \\ 0&\cdots&0&1&0 \\ -x_1&-x_2&\cdots&-x_{d-1}&1-x_d \\ -x_1&-x_2&\cdots&-x_{d-1}&-x_d\end{pmatrix} \quad \mbox{if}\ \mathbf{x} \in \Delta_{S_a},
\]
and the last two lines are exchanged for $\mathbf{x} \in \Delta_{S_b}$.
(In dimension $d=2$, we have $D_S^{(1)}(\mathbf{x}) =\small \begin{pmatrix}-x_1&1-x_2\\-x_1&-x_2\end{pmatrix}$ if $\mathbf{x} \in \Delta_{S_a}$, $D_S^{(1)}(\mathbf{x}) =\small \begin{pmatrix}-x_1&-x_2\\-x_1&1-x_2\end{pmatrix}$ if $\mathbf{x} \in \Delta_{S_b}$.) 
Evaluating $\frac{1}{n} \log \|A_S^{(n)}(\mathbf{x})\|$ and $\frac{1}{n} \log \|D_S^{(n)}(\mathbf{x})\|$ for randomly chosen points $\mathbf{x}$ and $n=2^{30}$ gives the estimates listed in Table~\ref{tab:0} for $\lambda_1(A_S)$ and $\lambda_1(D_S) = \lambda_2(A_S)$ (without guaranteed accuracy; compare \cite{Lab15} for the value in the case $d=2$). 
See the end of the Appendix for details on the computation. 

\begin{table}[h]
\begin{tabular}{c|c|c}
$d$ & $\lambda_2(A_S)$ & $1-\frac{\lambda_2(A_S)}{\lambda_1(A_S)}$ \\ \hline 
$2$ & $-0.07072$ & $1.3871$\\
$3$ & $-0.02283$ & $1.1444$ \\
$4$ & $+0.00176$ & $0.9866$\\
$5$ & $+0.01594$ & $0.8577$
\end{tabular}
\medskip
\caption{Heuristically estimated values for the second Lyapunov exponent and the uniform approximation exponent of the Selmer Algorithm\label{tab:0}}
\end{table}

\subsection{Cassaigne algorithm}
In 2015, Cassaigne defined an (unordered) continued fraction algorithm that was first studied in~\cite{CLL:17,AL18} where it was shown to be conjugate to Selmer's algorithm. 
 The  motivation for defining this new algorithm came from word combinatorics. Define the two matrices
\[
C_a = \begin{pmatrix}1&0&0\\1&0&1\\0&1&0\end{pmatrix}, \qquad C_b = \begin{pmatrix}0&1&0\\1&0&1\\0&0&1\end{pmatrix},
\]
set $\Delta' = \{(x_0,x_1,x_2) \in \mathbb{R}_+^3:\, x_0+x_1+x_2=1\}$, and
\[
A_C:\, \Delta' \to GL(3,\mathbb{Z}), \quad \mathbf{x} \mapsto \begin{cases}C_a & \mbox{if}\ \mathbf{x} \in \Delta'_{C_a} = \{(x_0,x_1,x_2) \in \Delta':\, x_0>x_2\}, \\ C_b & \mbox{if}\ \mathbf{x} \in \Delta'_{C_b} = \{(x_0,x_1,x_2) \in \Delta':\, x_0<x_2\}.\end{cases}
\]
Then the Cassaigne map is
\[
T_C:\, \Delta' \to \Delta' \quad \mbox{defined by} \quad T_C(\mathbf{x})= \frac{ \mathbf{x}\, A_C(\mathbf{x})^{-1} } {\Vert\mathbf{x}\, A_C(\mathbf{x})^{-1}\Vert_1}.
\]
From \cite[Section~5]{CLL:17}, we know that the Cassaigne algorithm is conjugate to the semi-sorted Selmer algorithm (defined e.g.~in \cite[Section~4]{CLL:17}) on the absorbing set, which differs from the sorted version of the Selmer algorithm defined in Section~\ref{sec:defselmer} only by the order of the elements. Therefore, all these algorithms have the same Lyapunov spectrum. 

\section{Brun and modified Jacobi--Perron algorithms}\label{sec:Brun}
For the homogeneous version of the Brun algorithm~\cite{Brun19,Brun20,BRUN}, the second largest element of a vector is subtracted from the largest one and the resulting vector is ordered descendingly, i.e., for its projective version we have
\[
T_B:\, \Delta \to \Delta, \quad T_B(x_1,\ldots,x_d) = \kappa(\mathrm{ord}(1-x_1,x_1,x_2,\ldots,x_d))
\]
with $\kappa$ as in \eqref{eq:iotakappa}.
To get the associated matrix valued function $A_B$, we define 
\[
B_0 = \begin{pmatrix}1&0&\cdots&\cdots&0 \\ 1&1&\ddots&&\vdots \\ 0&0&\ddots&\ddots&\vdots \\ \vdots&\vdots&\ddots&1&0 \\ 0&0&\cdots&0&1\end{pmatrix}, \quad
B_k = \begin{pmatrix}\hspace{3em} 1&1&0&\!\cdots\!&\!\cdots\!&\!\cdots\!&\!\cdots\!&0 \\[-1ex] \hspace{3em} 0&0&1&\!\ddots\!&&&&\vdots \\[-2ex] k\!-\!1 \Bigg\{\ \vdots&0&\!\ddots\!&\!\ddots\!&\!\ddots\!&&&\vdots \\[-3ex] \hspace{3em} 0&\vdots&\!\ddots\!&\!\ddots\!&1&\!\ddots\!&&\vdots \\[-1ex] \hspace{3em} 1&\vdots&&\!\ddots\!&0&0&\!\ddots\!&\vdots \\[-1ex] \hspace{3em} 0&\vdots&&&\!\ddots\!&1&\!\ddots\!&0 \\[-2ex] d\!-\!k \Bigg\{\ \vdots&\vdots&&&&\!\ddots\!&\!\ddots\!&0 \\[-2ex] \hspace{3em} 0&0&\!\cdots\!&\!\cdots\! &\!\cdots\!&\!\cdots\!&0&1\end{pmatrix},\ 1\le k \le d.
\]
Setting $x_0=1$, $x_{d+1}=0$, and
\[
\Delta_{B_k} = \{(x_1,\ldots,x_d) \in \Delta:\, x_{k+1}<1-x_1<x_k\} \qquad(0\le k\le d)
\]
we have
\[
A_B(\mathbf{x}) = B_k \quad \mbox{if}\ \mathbf{x} \in \Delta_{B_k} \qquad(0\le k\le d).
\]
In view of \cite[Section~6]{Lagarias:93}, Brun's algorithm satisfies the assumptions of Proposition~\ref{prop:lag} (in particular, it satisfies the assumptions (H1) to (H5)). 
Evaluating $\frac{1}{n} \log \|A_B^{(n)}(\mathbf{x})\|$ and $\frac{1}{n} \log \|D_B^{(n)}(\mathbf{x})\|$ for randomly chosen points $\mathbf{x}$ and $n=2^{30}$ gives the estimates listed in Table~\ref{tab:1} for $\lambda_1(D_B) = \lambda_2(A_B)$ and for the uniform approximation exponent. 

\begin{table}[ht]
\begin{tabular}{c|c|c||c|c|c}
$d$ & $\lambda_2(A_B)$ & $1-\frac{\lambda_2(A_B)}{\lambda_1(A_B)}$ &
$d$ & $\lambda_2(A_B)$ & $1-\frac{\lambda_2(A_B)}{\lambda_1(A_B)}$ \\ \hline
$2$ & $-0.11216$ & $1.3683$ & $7$ & $-0.01210$ & $1.0493$ \\
$3$ & $-0.07189$ & $1.2203$ & $8$ & $-0.00647$ & $1.0283$ \\
$4$ & $-0.04651$ & $1.1504$ & $9$ & $-0.00218$ & $1.0102$ \\
$5$ & $-0.03051$ & $1.1065$ & $10$ & $+0.00115$ & $0.9943$ \\
$6$ & $-0.01974$ & $1.0746$ & $11$ & $+0.00381$ & $0.9799$ 
\end{tabular}
\medskip
\caption{Heuristically estimated values for the second Lyapunov exponent and the uniform approximation exponent of the Brun Algorithm\label{tab:1}}
\end{table}

The modified Jacobi--Perron algorithm (or $d$-dimensional Gauss algorithm), which goes back to Podsypanin~\cite{P:77}, is an accelerated version of the Brun algorithm, defined by the jump transformation $\mathbf{x} \mapsto T_B^n(\mathbf{x})$ with the minimal $n \ge 1$ such that $T_B^{n-1}(\mathbf{x}) \notin \Delta_{B_0}$; see \cite[Section~6.2]{Schweiger:00}. 
Its second Lyapunov exponent is thus negative if and only if $\lambda_2(A_B) < 0$.
In particular, the conjecture of \cite{Hardcastle:02} that the second Lyapunov exponent is negative for all $d \ge 2$ seems to be wrong in view of Table~\ref{tab:1}. 
We mention that for $d=2$ negativity of $\lambda_2(A_B)$ is proved in \cite{IKO,FIKO} by heavy use of computer calculation. Later, Meester~\cite{M:99} found a more elegant proof by deriving a Paley--Ursell type inequality for this setting and adapting Schweiger's argument from~\cite[Chapter 16]{Schweiger:00}.
Avila and Delecroix~\cite{AD15} gave a simple proof by showing that the $\infty$-norm of the restriction of $A_B^{(n)}(\mathbf{x})$ to $\iota(\mathbf{x})^\bot$ is bounded by~$1$; see Remark~\ref{r:AD}. Schratzberger~\cite{Schratzberger:01} gave  a  proof of the  strong convergence of  Brun algorithm in dimension $d=3$. Hardcastle~\cite{Hardcastle:02} even shows that  $\lambda_2(A_B) < 0$ holds for $d=3$. The dependence of the entropy of  the Brun algorithm with respect to the dimension is  studied in \cite{BLV:18}.

\section{Jacobi--Perron algorithm}\label{sec:JP}
We now consider the Jacobi--Perron algorithm; see \cite[Chapter~4 and 16]{Schweiger:00}, earlier references are \cite{Bernstein:71,Schweiger:73}. A~projective version of this algorithm is given by 
\[
T_J:\ [0,1]^d \to [0,1]^d, \quad 
(x_1,x_2,\ldots,x_d) \mapsto \Big(\frac{x_2}{x_1} - \Big\lfloor\frac{x_2}{x_1}\Big\rfloor, \ldots, \frac{x_d}{x_1} - \Big\lfloor\frac{x_d}{x_1}\Big\rfloor, \frac{1}{x_1} - \Big\lfloor\frac{1}{x_1}\Big\rfloor\Big).
\]
Its matrix version is therefore
\[
(x_0,x_1,\ldots,x_d) \mapsto \Big(x_1, x_2 - \Big\lfloor\frac{x_2}{x_1}\Big\rfloor x_1, \ldots, x_d - \Big\lfloor\frac{x_d}{x_1}\Big\rfloor x_1, x_0 - \Big\lfloor\frac{x_0}{x_1}\Big\rfloor  x_1\Big),
\]
and we have
\[
A_J(x_1,\ldots,x_d) = \begin{pmatrix}\lfloor\frac{1}{x_1}\rfloor & 1 & \lfloor\frac{x_2}{x_1}\rfloor & \cdots & \lfloor\frac{x_{d-1}}{x_d}\rfloor \\ 0&0&1&0&0 \\ \vdots&\vdots&\ddots&\ddots&0 \\ 0&\vdots&&\ddots&1 \\ 1&0&\cdots&\cdots&0\end{pmatrix}.
\]
This is a multiplicative algorithm in the sense that  divisions are performed  instead of subtractions, hence the coordinates are multiplied by arbitrarily large integers, and there are infinitely many different matrices~$A_J(\mathbf{x})$. 
It is proved in \cite[Section~5]{Lagarias:93} that the Jacobi--Perron algorithm satisfies the assumptions of Proposition~\ref{prop:lag} (in particular, it satisfies the assumptions (H1) to (H5)). The Jacobi--Perron algorithm is not ordered, thus it is defined in the whole unit cube.

It is known that the second Lyapunov exponent of the Jacobi--Perron algorithm is negative for $d=2$. A~proof of this fact, based on an old result by Paley and Ursell~\cite{PU:30}, is given in Schweiger~\cite[Chapter 16]{Schweiger:00}. 
Table~\ref{tab:JP} contains numerical estimates for the Lyapunov exponents of the Jacobi--Perron algorithm for low dimensions.
This table indicates that, like for the Brun algorithm, the second Lyapunov exponent of the Jacobi--Perron algorithm is negative for all $d \le 9$ and positive for all $d \ge 10$. This gives evidence that \cite[Conjecture~1.2]{Lagarias:93} does not hold.

\begin{table}[ht]
\begin{tabular}{c|c|c||c|c|c}
$d$ & $\lambda_2(A_J)$ & $1-\frac{\lambda_2(A_J)}{\lambda_1(A_J)}$ &
$d$ & $\lambda_2(A_J)$ & $1-\frac{\lambda_2(A_J)}{\lambda_1(A_J)}$ \\ \hline
$2$ & $-0.44841$ & $1.3735$ & $7$ & $-0.02819$ & $1.0243$ \\
$3$ & $-0.22788$ & $1.1922$ & $8$ & $-0.01470$ & $1.0127$ \\
$4$ & $-0.13062$ & $1.1114$ & $9$ & $-0.00505$ & $1.0044$ \\
$5$ & $-0.07880$ & $1.0676$ & $10$ & $+0.00217$ & $0.9981$\\
$6$ & $-0.04798$ & $1.0413$ & $11$ & $+0.00776$ & $0.9933$
\end{tabular}
\medskip
\caption{Heuristically estimated values for the second Lyapunov exponent and the uniform approximation exponent of the Jacobi--Perron Algorithm\label{tab:JP}}
\end{table}

\section{An intermediate algorithm between Arnoux--Rauzy and Brun}\label{sec:Algo}
From \cite{AD15}, we know that the second Lyapunov exponent of the Arnoux--Rauzy algorithm is negative for all $d \ge 2$, but this algorithm is only defined on a set of Lebesgue measure zero. 
We propose an algorithm that is in some sense between Arnoux--Rauzy and Brun: We subtract as many of the subsequent elements of a given vector from the first one (which is also the largest one) as possible.  (In the Arnoux--Rauzy algorithm, we always subtract all but the largest element from the largest one.) The matrix version of this algorithm is (with $x_{d+1}=x_0$)
\[
(x_0,x_1,\ldots,x_d) \mapsto \mathrm{ord}\Big( 
x_0 - \sum_{j=1}^k x_j,x_1,\ldots, x_d \Big) \quad \mbox{if}\ \sum_{j=1}^k x_j < x_0 < \sum_{j=1}^{k+1} x_j \quad (1\le k\le d).
\]
Denote  by $\Delta_{I_{k,\ell}}$, $1 \le k < d$, $k \le \ell \le d$, the set of $(x_1,\ldots,x_d) \in \Delta$ with $\sum_{j=1}^k x_j < 1 < \sum_{j=1}^{k+1} x_j$ and $x_\ell > 1 - \sum_{j=1}^k x_j > x_{\ell+1}$ (where $x_{d+1} = 0$), and denote by $\Delta_{I_{d,\ell}}$, $0 \le \ell \le d$, the set of $(x_1,\ldots,x_d) \in \Delta$ with $\sum_{j=1}^d x_j < 1$ and $x_\ell > 1 - \sum_{j=1}^d x_j > x_{\ell+1}$ (where $x_0 = 1$, $x_{d+1} = 0$).
Then we have
\[
A_I(\mathbf{x}) = I_{k,\ell} \quad \mbox{if}\ \mathbf{x} \in \Delta_{I_{k,\ell}},
\]
with 
\[
I_{k,\ell} = \begin{pmatrix}\hspace{3em} 1&1&0&\!\cdots\!&\!\cdots\!&\!\cdots\!&\!\cdots\!&\!\cdots\!&\!\cdots\!&0 \\[-2ex] \hspace{1.5em} k \Bigg\{\ \vdots&0&1&\!\ddots\!&&&&&&\vdots \\[-3ex] \hspace{3em} 1&0&\!\ddots\!&\!\ddots\!&\!\ddots\!&&&&&\vdots \\[-1ex] \hspace{3em} 0&\vdots&\!\ddots\!&\!\ddots\!&\!\ddots\!&\!\ddots\!&&&&\vdots \\[-2ex] \ell\!-\!k \Bigg\{\ \vdots&\vdots&&\!\ddots\!&\!\ddots\!&\!\ddots\!&\!\ddots\!&&&\vdots \\[-3ex] \hspace{3em} 0&\vdots&&&\!\ddots\!&\!\ddots\!&1&\!\ddots\!&&\vdots \\[-1ex] \hspace{3em} 1&\vdots&&&&\!\ddots\!&0&0&\!\ddots\!&\vdots \\[-1ex] \hspace{3em} 0&\vdots&&&&&\!\ddots\!&1&\!\ddots\!&0 \\[-2ex] d\!-\!\ell \Bigg\{\ \vdots&\vdots&&&&&&\!\ddots\!&\!\ddots\!&0 \\[-2ex] \hspace{3em} 0&0&\!\cdots\!&\!\cdots\!&\!\cdots\!&\!\cdots\!&\!\cdots\!&\!\cdots\!&0&1\end{pmatrix} \quad \mbox{if}\ 1\le k\le \ell \le d, 
\]
\[
I_{d,\ell} = \begin{pmatrix}\hspace{3em} 1&1&0&\!\cdots\!&\!\cdots\!&\!\cdots\!&0 \\[-2ex] \hspace{1.5em} \ell \Bigg\{\ \vdots&0&\!\ddots\!&\!\ddots\!&&&\vdots \\[-3ex] \hspace{3em} 1&0&\!\ddots\!&1&\!\ddots\!&&\vdots \\[-1ex] \hspace{3em} 1&\vdots&\!\ddots\!&0&0&\!\ddots\!&\vdots \\[-1ex] \hspace{3em} 1&\vdots&&\!\ddots\!&1&\!\ddots\!&0 \\[-2ex] d\!-\!\ell \Bigg\{\ \vdots&\vdots&&&\!\ddots\!&\!\ddots\!&0 \\[-2ex] \hspace{3em} 1&0&\!\cdots\!&\!\cdots\!&\!\cdots\!&0&1\end{pmatrix} \quad \mbox{if}\ 0\le \ell \le d.
\]
The Arnoux--Rauzy algorithm is the special case where $T^n\mathbf{x} \in \Delta_{I_{d,\ell}}$, $0 \le \ell \le d$, for all $n \ge 0$. 
It seems that the second Lyapunov exponent of our intermediate algorithm is negative for all $d \le 10$ and positive for all $d \ge 11$. 
The according heuristic estimates are listed in Table~\ref{tab:N}.

\begin{table}[ht]
\begin{tabular}{c|c|c||c|c|c}
$d$ & $\lambda_2(A_I)$ & $1-\frac{\lambda_2(A_I)}{\lambda_1(A_I)}$ &
$d$ & $\lambda_2(A_I)$ & $1-\frac{\lambda_2(A_I)}{\lambda_1(A_I)}$ \\ \hline
$2$ & $-0.13648$ & $1.3606$ & $7$ & $-0.02033$ & $1.0729$ \\
$3$ & $-0.10803$ & $1.2430$ & $8$ & $-0.01175$ & $1.0468$ \\
$4$ & $-0.07540$ & $1.1817$ & $9$ & $-0.00563$ & $1.0246$ \\
$5$ & $-0.05035$ & $1.1388$ & $10$ & $-0.00114$ & $1.0054$ \\
$6$ & $-0.03263$ & $1.1034$ & $11$ & $+0.00224$ & $0.9886$ 
\end{tabular}
\medskip
\caption{Heuristically estimated values for the second Lyapunov exponent and the uniform approximation exponent of the intermediate algorithm\label{tab:N}}
\end{table}

Using methods from Messaoudi, Nogueira, and Schweiger~\cite{MNS:09} as well as from Fougeron and Skripchenko \cite{FS:19} one can show that  the assumptions of Proposition~\ref{prop:lag} hold also for this algorithm. This will imply that negativity of the second Lyapunov exponent is a sufficient condition for strong convergence also for this algorithm. We will come back to this in a forthcoming paper.

\section{Garrity's triangle algorithm}\label{sec:Garrity}
A similar algorithm to the one in Section~\ref{sec:Algo} was proposed by Garrity~\cite{Garrity:01},  called the {\em triangle algorithm} (or {\em simplex algorithm for $d\ge 3$}), with the difference that the smallest coefficient is subtracted as many times as possible from the largest one when all other coefficients have already been subtracted. 
Similarly as  in the case of  Selmer's algorithm  (see \cite[Section 2]{Schweiger:04}), convergence properties are altered  by  taking  divisions  instead of  subtractions.
This will  be seen on the   second  Lyapunov exponent below.
Observe that this cannot be considered as  a real acceleration (as in the  regular   continued fraction case, or  as in  the Brun or in the Jacobi--Perron cases),
  since  taking divisions instead  of   subtractions
 yields a  completely  different  algorithm (similarly to the Selmer  case).

The matrix version of this algorithm is thus
\[
(x_0,x_1,\ldots,x_d) \mapsto \begin{cases}\mathrm{ord}\Big( 
x_0 - \sum_{j=1}^k x_j,x_1,\ldots, x_d \Big) & \\ & \hspace{-10em}\mbox{if}\ \sum_{j=1}^k x_j < x_0 < \sum_{j=1}^{k+1} x_j,\ 1\le k\le d-2, \\
\Big(x_1,\ldots, x_d,x_0 - \sum_{j=1}^{d-1} x_j - \ell x_d\Big) & \\ & \hspace{-10em}\mbox{if}\ \sum_{j=1}^{d-1} x_j + \ell x_d < x_0 < \sum_{j=1}^{d-1} (\ell+1) x_d,\ \ell \ge 0. 
\end{cases}
\]
We have 
\[
A_I(\mathbf{x}) = G_{k,\ell} \quad \mbox{if}\ \mathbf{x} \in \Delta_{G_{k,\ell}},
\]
with $G_{k,\ell} = I_{k,\ell}$ and $\Delta_{G_{k,\ell}} = \Delta_{I_{k,\ell}}$ for $1 \le k \le d-2$, $\ell \le k \le d$, 
\begin{gather*}
G_{d-1,\ell} = \begin{pmatrix}1&1&0&\cdots&0 \\ \vdots&0&\ddots&\ddots&\vdots \\ 1&\vdots&\ddots&1&0 \\ \ell&\vdots&&\ddots&1 \\ 1&0&\cdots&\cdots&0\end{pmatrix} \quad \mbox{for}\ \ell \ge 0, \\
\Delta_{G_{d-1,\ell}} = \Big\{(x_1,\ldots,x_d) \in \Delta:\, \sum_{j=1}^{d-1} x_j + \ell x_d < 1 < \sum_{j=1}^{d-1} (\ell+1) x_d\Big\}.
\end{gather*}
Here we have the curious situation that the second Lyapunov exponent seems to be negative if and only if $7 \le d \le 10$. 
The according heuristic estimates are listed in Table~\ref{tab:G}.

\begin{table}[ht]
\begin{tabular}{c|c|c||c|c|c}
$d$ & $\lambda_2(A_G)$ & $1-\frac{\lambda_2(A_G)}{\lambda_1(A_G)}$ &
$d$ & $\lambda_2(A_G)$ & $1-\frac{\lambda_2(A_G)}{\lambda_1(A_G)}$ \\ \hline
$2$ & $+0.34434$ & $0.6859$ & $7$ & $-0.00644$ & $1.0225$ \\
$3$ & $+0.37673$ & $0.5798$ & $8$ & $-0.00768$ & $1.0304$ \\
$4$ & $+0.25232$ & $0.6286$ & $9$ & $-0.00435$ & $1.0189$ \\
$5$ & $+0.10677$ & $0.7778$ & $10$ & $-0.00074$ & $1.0035$ \\
$6$ & $+0.01859$ & $0.9468$ & $11$ & $+0.00237$ & $0.9880$ 
\end{tabular}
\medskip
\caption{Heuristically estimated values for the second Lyapunov exponent and the uniform approximation exponent of Garrity's simplex algorithm\label{tab:G}}
\end{table}

Again using methods from \cite{MNS:09} and \cite{FS:19} one can show that  the assumptions of Proposition~\ref{prop:lag} hold also for this algorithm  in any dimension (although this is a bit more involved in this case because the algorithm is multiplicative).
The case $d=2$ is handled in \cite{FS:19}, and the general case will be addressed in a forthcoming paper. 

\section{Heuristical comparison between the algorithms} \label{sec:heuristic}

We conclude with a table that allows to compare the (heuristically estimated) uniform approximation exponents of the algorithms considered in this paper. In this table we also indicate Dirichlet's bound $ 1+1/d $.
\begin{table}[ht]
\begin{tabular}{c|c|c|c|c|c|l}
$d$ & Selmer & Brun & Jacobi--Perron & Intermediate & Garrity & $1+1/d$  \\ \hline
$2$ & $\mathbf{1.3871}$ & $1.3683$ & $1.3735$ & $1.3606$ & $0.6859$ & 1.5 \\
$3$ & $1.1444$ & $1.2203$ & $1.1922$ & $\mathbf{1.2430}$ & $0.5798$ &1.3333 \\
$4$ & $0.9866$ & $1.1504$ & $1.1114$ & $\mathbf{1.1817}$ & $0.6286$ & 1.25 \\
$5$ & $0.8577$ & $1.1065$ & $1.0676$ & $\mathbf{1.1388}$ & $0.7778$  & 1.2\\
$6$ & $0.7442$ & $1.0746$ & $1.0413$ & $\mathbf{1.1034}$ & $0.9468$  & 1.1667 \\
$7$ & $0.6437$ & $1.0493$ & $1.0243$ & $\mathbf{1.0729}$ & $1.0225$  & 1.1429\\
$8$ & $0.5561$ & $1.0283$ & $1.0127$ & $\mathbf{1.0468}$ & $1.0304$ & 1.125 \\
$9$ & $0.4810$ & $1.0102$ & $1.0044$ & $\mathbf{1.0246}$ & $1.0189$ & 1.1111 \\
$10$ & $0.4173$ & $0.9943$ & $0.9981$ & $\mathbf{1.0054}$ & $1.0035$ & 1.1\\
$11$ & $0.3636$ & $0.9799$ & $\mathbf{0.9933}$ & $0.9886$ & $0.9880$ & 1.0909
\end{tabular}
\medskip
\caption{Synopsis of the uniform approximation exponents $1 -\frac{\lambda_2(A)}{\lambda_1(A)}$}
\end{table}

\section*{Appendix: Comments on the floating point calculations}

In this appendix we discuss the computational issues of the calculations leading to the estimate of the Lyapunov exponent $\lambda_2(A_S)$ for the Selmer algorithm in Theorem~\ref{thm:strongSelmer} ($d=2$) and Theorem~\ref{thm:strongSelmer3} ($d=3$). As these calculations are extensive we had to execute them using a GPU. All calculations were performed on an \emph{Apple MacBook Pro 2019} with an \emph{Intel Iris Plus Graphics 655 1536 MB} card using the \emph{XCode} environment. The language we used is \emph{Objective C}, where the code executed on the GPU is implemented in Apple's \emph{Metal} language.  

We start with Selmer's algorithm for $d=3$; the easier case $d=2$ will be treated after that.
In order to estimate the second Lyapunov exponent $\lambda_2(A_S)$ of the Selmer algorithm, $d=3$, we use inequality \eqref{eq:toCompute}. Since $\bx \mapsto \log\Vert D_S^{52}(\bx)\Vert_\infty$ is convex (cf.~\cite[Lemma~4.5]{HK:02}), for each $M \in \{S_a,S_b\}^{52}$ it is sufficient to compare the values at the vertices of $\Delta_M$ to compute the maximum over $\Delta_M$ in \eqref{eq:toCompute}.

We first deal with the case $M \in \{S_a,S_b\}^{52} \setminus \{S_b^{52}\}$, i.e., with the sums $\sum^+$ and $\sum^-$ of \eqref{eq:toCompute}.
For each of the summands it is possible to calculate the rational numbers $\Vert D_S^{52}(\bx) \Vert_\infty$ for all $\bx$ being a vertex of $\Delta_M$ with $M \in \{S_a,S_b\}^{52}\setminus \{S_b^{52}\}$ by using integer arithmetics and treating the denominator and the numerator separately. Also $\mathrm{Leb}(\Delta_M)$, $\max_{\mathbf{x}\in \Delta_M} \frac{1}{x_1} \max_{\mathbf{x}\in \Delta_M} \frac{1}{x_2}$, and $\min_{\mathbf{x}\in \Delta_M} \frac{1}{x_1} \min_{\mathbf{x}\in \Delta_M} \frac{1}{x_2} \mathrm{Leb}(\Delta_M)$ can be calculated using integer arithmetics. Thus these calculations are exact. 

When taking the logarithm and multiplying it by $\max_{\mathbf{x}\in \Delta_M} \frac{1}{x_1} \max_{\mathbf{x}\in \Delta_M} \frac{1}{x_2} \mathrm{Leb}(\Delta_M)$ and  $\min_{\mathbf{x}\in \Delta_M} \frac{1}{x_1} \min_{\mathbf{x}\in \Delta_M} \frac{1}{x_2} \mathrm{Leb}(\Delta_M)$, respectively, we are forced to switch to floating point arithmetics. The software we use, namely \emph{Metal} and \emph{Objective C}, complies with the \emph{IEEE 754} standard for floating point arithmetics.\footnote{In \emph{Metal}, {\tt float} is the most precise data type for floating point calculations. For the part of the code written in \emph{Objective C} we use the data type {\tt long double} to gain higher precision.}
For relevant facts on floating point arithmetics and details on this IEEE standard, we refer e.g.\  to~\cite{Goldberg:91}; the language specification of \emph{Metal} is laid out in~\cite{Metal:17}.

Using floating point arithmetics entails rounding errors. Because we want an exact upper bound in the estimate of $\lambda_2(A_S)$ provided in Theorem~\ref{thm:strongSelmer3} we need to make sure that the error we produce by using floating point arithmetics yields a result which is not smaller than the exact result would be. To guarantee this, after each floating point operation we use the function\footnote{See for instance {\tt https://en.cppreference.com/w/c/numeric/math/nextafter} for a documentation of this function as well as its sibling {\tt long double nextafterl( long double x, long double y )}.} 

\smallskip
\noindent
\texttt{float nextafterf( float x, float y )}.
\smallskip

\noindent Setting \texttt{y=INFINITY} and \texttt{y=-INFINITY} this function returns the smallest floating point number which is greater than $\tt x$ and the largest floating point number which is smaller than $\tt x$, respectively.  Using this function makes the estimate for $\lambda_2(A_S)$ in Theorem~\ref{thm:strongSelmer3} exact (at the price that the modulus of the upper bound we gain is about 0.5\% to 1\% larger than it would be without applying this function). Our calculations yield 
\begin{equation}\label{eq:compA}
\begin{split}
&{\sum}^+ \max_{\mathbf{x}\in \Delta_M} \frac{1}{x_1} \max_{\mathbf{x}\in \Delta_M} \frac{1}{x_2} \mathrm{Leb}(\Delta_M) \max_{\mathbf{x}\in \Delta_M} \log \|D_S^{(52)}(\mathbf{x})\|_\infty  
\\
 + &
{\sum}^- \min_{\mathbf{x}\in \Delta_M} \frac{1}{x_1} \min_{\mathbf{x}\in \Delta_M} \frac{1}{x_2} \mathrm{Leb}(\Delta_M) \max_{\mathbf{x}\in \Delta_M} \log \|D_S^{(52)}(\mathbf{x})\|_\infty
\le -0.004845689,
\end{split}
\end{equation}
where $\sum^+$ and $\sum^-$ are defined as in \eqref{eq:toCompute}.

The summand in \eqref{eq:toCompute} corresponding to $M=S_b^{52}$ has to be treated separately as follows. First  note that  the estimate
\[
\mu_S(\Delta_{S_b^{52}})=\frac{8}{\zeta(3)} \int_{\Delta_{S_b^{52}}} \frac{\mathrm{d}x_1\mathrm{d}x_2\mathrm{d}x_3}{x_1x_2x_3} \le 0.004776713
\]
follows if one evaluates the integral using polylogarithms (which we did with the help of \emph{Mathematica}). Since it is easy to see that $\max_{\mathbf{x}\in \Delta_{S_b^{52}}} \log \|D_S^{(52)}(\mathbf{x})\|_\infty=2$ we gain
\begin{equation}\label{eq:compB}
 \mu_S(\Delta_{S_b^{52}}) \max_{\mathbf{x}\in \Delta_{S_b^{52}}} \log \|D_S^{(52)}(\mathbf{x})\|_\infty \le 0.009553426.
\end{equation}

Inserting \eqref{eq:compA} and \eqref{eq:compB} in \eqref{eq:toCompute} we end up with
\[
\lambda_2(A_S) \le -0.000436459,
\]
which is the upper bound for $\lambda_2(A_S)$ stated in Theorem~\ref{thm:strongSelmer3}.

To treat the case $d=2$ our starting point is \eqref{eq:tcd2} with $n=25$. Since we always have $\max_{\mathbf{x}\in \Delta_M} \log \|D_S^{(2n)}(\mathbf{x})\|_\infty \le 0$ the whole sum in \eqref{eq:tcd2} is of the type $\sum^-$ and, hence, also the contribution of $ \Delta_{S_b^{2n}}$ does not need to be treated separately. By the same strategy as the one outlined for $d=3$ we gain the estimate
\[
\sum_{M\in\{S_a,S_b\}^{2n}} \min_{\mathbf{x}\in \Delta_M} \frac{1}{x_1} \min_{\mathbf{x}\in \Delta_M} \frac{1}{x_2} \mathrm{Leb}(\Delta_M) \max_{\mathbf{x}\in \Delta_M} \log \| D_S^{(2n)}(\mathbf{x}) \|_\infty \le -2.06343104875.
\]
Inserting this in \eqref{eq:tcd2} yields the estimate stated in Theorem~\ref{thm:strongSelmer}.

\medskip
For the other dimensions and algorithms, we do not calculate upper bounds for the second Lyapunov exponent. 
Instead, we get heuristics for $\lambda_2(A)$ by calculating $D^{(n)}(\mathbf{x})$ for $n=2^{30}$ and ten randomly chosen points $\mathbf{x} \in \Delta$, using a \emph{C}~program with double precision floating point arithmetic.
In order for the matrices not to become too small or too large, we renormalize after each $k=2^{10}$ steps by dividing by the top left coefficient of the matrix. 
This means that we calculate iteratively 
\[
\frac{1}{D_{1,1}^{(jk+k)}(\mathbf{x})} D^{(jk+k)}(\mathbf{x}) = \frac{D_{1,1}^{(jk)}(\mathbf{x})}{D_{1,1}^{(jk+k)}(\mathbf{x})} D^{(k)}(T^{jk} \mathbf{x}) \, \frac{1}{D_{1,1}^{(jk)}(\mathbf{x})} D^{(jk)}(\mathbf{x}),
\]
for $0 \le j < n/k$, where $D_{1,1}^{(\ell)}(\mathbf{x})$ denotes the top left coefficient of the matrix $D^{(\ell)}(\mathbf{x})$.
Keeping track of the normalisation factors, we have 
\[
\log \big\|D^{(n)}(\mathbf{x})\big\| = \log \bigg\|\frac{D^{(n)}(\mathbf{x})}{D_{1,1}^{(n)}(\mathbf{x})}\bigg\| + \sum_{j=0}^{n/k-1} \log \bigg|\frac{D_{1,1}^{(jk+k)}(\mathbf{x})}{D_{1,1}^{(jk)}(\mathbf{x})}\bigg|.
\]

\bibliographystyle{amsalpha}
\bibliography{sadic5}
\end{document}